\documentclass{amsart}
\usepackage{amssymb,stmaryrd,mathrsfs,dsfont,amsmath}
\usepackage{colonequals}

\theoremstyle{plain}
\newtheorem{theorem}{Theorem}[section]
\newtheorem{lemma}[theorem]{Lemma}
\newtheorem{proposition}[theorem]{Proposition}
\newtheorem{corollary}[theorem]{Corollary}

\theoremstyle{definition}
\newtheorem{notation}[theorem]{Notation}
\newtheorem{example}[theorem]{Example}

\DeclareMathOperator{\N}{\mathbb{N}}
\DeclareMathOperator{\Z}{\mathbb{Z}}

\input xy

\usepackage{microtype}
\begin{document}
	

\title[On Isolated Gaps of Numerical Semigroups]{On Isolated Gaps of Numerical Semigroups of embedding dimension two}

\author[Shubh N. Singh]{\bfseries Shubh N. Singh}
\address{Department of Mathematics, Central University of South Bihar, Gaya--824236, Bihar, India}
\email{shubh@cub.ac.in}

	\author[Ranjan K. Ram]{\bfseries Ranjan K. Ram}
	\address{Department of Mathematics, Central University of South Bihar, Gaya--824236, Bihar, India}
	\email{ranjankr@cusb.ac.in}

	\subjclass[2010]{20M14, 11D04}
	\keywords{Numerical semigroup, Frobenius number, Isolated gap, Perfect numerical semigroup}

	
\begin{abstract}


We explicitly describe all the isolated gaps of any numerical semigroup of embedding dimension two, and we give an exact formula for the number of isolated gaps of these numerical semigroups.

\end{abstract}
\maketitle

\section{Introduction}
Let $\N$ denote the set of all nonnegative integers. A \emph{numerical semigroup} is a nonempty subset $S$ of $\N$ such that $S$ is closed under addition, $0\in S$, and the set $\N\setminus S\colonequals \{n\in \mathbb{N}\colon n\notin S\}$ is finite. The \emph{set of gaps} $G(S)$ of a numerical semigroup $S$ is the finite set $\N\setminus S$. Given positive integers $a_1< \cdots< a_m$, let $\langle a_1, \ldots, a_m\rangle \colonequals \{\lambda_1 a_1 + \cdots + \lambda_m a_m\colon \lambda_1, \ldots, \lambda_m \in \N\}$. It is well-known that $\langle a_1, \ldots, a_m\rangle$ is a numerical semigroup if and only if the greatest common divisor of $a_1, \ldots, a_m$ is one (cf. \cite[Lemma 2.1]{rosales-b09}), and every numerical semigroup has this form (cf. \cite[Theorem 2.7]{rosales-b09}).

\vspace{0.5mm}

In $2019$, Moreno-Fr{\'i}as and Rosales \cite[p. 1742]{rosales_tjm19} defined an \emph{isolated gap} of a numerical semigroup $S$ to be an element $x\in G(S)$ such that $\{x-1, x+1\} \subseteq S$, and called a numerical semigroup \emph{perfect} if it does not contain any isolated gaps. Moreno-Fr{\'i}as and Rosales proved that any numerical semigroup of embedding dimension two is not perfect \cite[Corollary 4]{rosales-pmd20}. Further, Moreno-Fr{\'i}as and Rosales characterized perfect numerical semigroups of embedding dimension three \cite[Proposition 8]{rosales-pmd20}. Smith studied various properties of isolated gaps of any non-perfect numerical semigroup \cite{smith-tjm22}. Smith proved that if $h$ is the smallest isolated gap of a numerical semigroup $\langle a, b \rangle$, then the numerical semigroup $\langle a, b, h\rangle$ is perfect \cite[Theorem 3.1]{smith-tjm22}.

\vspace{0.5mm}
One of the main results of this paper is Theorem \ref{th_card-of-iso-gaps-set}, which determines the number of isolated gaps of $S$. Theorem \ref{th_card-of-iso-gaps-set} permit us to explicitly describe all the isolated gaps of these numerical semigroups.

\vspace{0.5mm}
The rest of the paper is organized as follows: In Section $2$, we recall preliminary notions, introduce notation, and state some well known results. In Section 3, we determine the number of isolated gaps of any numerical semigroup of embedding dimension two. In Section 4, we describe all the isolated gaps of any numerical semigroup of embedding dimension two. We further give an exact formula for the smallest isolated gap of these numerical semigroups.


\section{Preliminaries and Notation}

Let $X$ be a nonempty set. We denote by $\# X$ the cardinality of $X$. A \emph{partition} of $X$ is a collection of pairwise disjoint nonempty subsets, called \emph{blocks}, whose union is $X$. A partition is \emph{uniform} if all its blocks have the same cardinality. Let $f\colon X \to Y$ be a function. We write $f(x)$ to denote the image of an element $x\in X$ under $f$. For a subset $A$ of $X$ and a subset $B$ of $Y$, let $f(A)\colonequals \{f(x)\colon x\in A\}$ and $f^{-1}(B)\colonequals \{x\in X\colon  f(x) \in B\}$. The absolute value of a real number $x$ is denoted by $|x|$.

\vspace{0.5mm}
The set of all integers is denoted by $\Z$. For an integer $m\ge 2$, the greatest common divisor of the positive integers $a_1, \ldots, a_m$ is denoted by $\gcd(a_1, \ldots, a_m)$. For integers $x$ and $y$ with $x<y$, let $[x, y] \colonequals \{z\in \Z\colon x\le z\le y\}$. For an integer $k$ and a set $A$ of integers, let $kA\colonequals \{ka\colon a\in A\}$.  We denote by $\min A$ (resp. $\max A$) the smallest (resp. greatest) element of a finite nonempty set $A$ of integers. For integers $a$, $b$, $m$ with $m\ge 1$, we say that \emph{$a$ is congruent to $b$ modulo $m$}, written $a\equiv b \pmod m$, if $m$ divides $a-b$. A \emph{complete residue system modulo $m$} is a set of $m$ integers such that every integer is congruent modulo $m$ to exactly one element of the set. Let $a$ and $b$ be relatively prime positive integers. A solution $(u, v)$ of the Diophantine equation $ax+by = 1$ is a \emph{definitely least solution} (\emph{d.l.s.}) if both $|u|$ and $|v|$ attain their least possible values (cf. \cite[p. 646]{levit-amm56}). It is known that there is a unique d.l.s. of the Diophantine equation $ax+by = 1$ \cite[Theorem 3]{levit-amm56}.

\vspace{0.5mm}
Let $S$ be a numerical semigroup. We denote by $I(S)$ the set of all isolated gaps of $S$. We use $F(S)$ to denote the Frobenius number of $S$. Let $N(S)\colonequals \{s\in S\colonequals s< F(S)\}$.  It is well-known that every numerical semigroup admits a unique minimal (under inclusion) finite generating set (cf. \cite[Theorem 2.7]{rosales-b09}). The \emph{embedding dimension} of $S$ is the cardinality of the minimal generating set of $S$. We say that $S$ is \emph{symmetric} if $F(S)-x \in S$ for all $x\in \Z\setminus S$. It is well-known that every numerical semigroup of embedding dimension two is symmetric (cf. \cite[Corollary 4.7]{rosales-b09}). For an element $n\in S\setminus \{0\}$, the \emph{Ap\'ery set} of $n$ in $S$ is $\text{Ap}(S, n) = \{s\in S\colon s-n\notin S\}$.

\vspace{0.5mm}
We refer the reader to \cite{rosales-b09} for any undefined notions and results on numerical semigroups. In the rest of the paper, we will henceforth assume that $a$ and $b$ are relatively prime positive integers such that $1<a<b$, and $(u,v)$ is the d.l.s. of the equation $ax+by = 1$. 

\vspace{0.5mm}

We end this section by listing two known lemmas.
\begin{lemma}\cite[Lemma]{levit-amm56}\label{le_iff-condition-for-dls}
A solution $(x_0, y_0)$ of the Diophantine equation $ax+by = 1$ is the d.l.s. if and only if $|x_0|\le \frac{b}{2}$ and $|y_0| \le \frac{a}{2}$.
\end{lemma}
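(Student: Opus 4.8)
The plan is to reduce the statement to an elementary one–variable optimization. Fix one solution $(x_0,y_0)$ of $ax+by=1$; since $\gcd(a,b)=1$, a routine argument shows that the full solution set is exactly $\{(x_0+bt,\,y_0-at)\colon t\in\Z\}$. Hence $(x_0,y_0)$ being a d.l.s. means precisely that $t=0$ simultaneously minimizes $t\mapsto|x_0+bt|$ and $t\mapsto|y_0-at|$ over $t\in\Z$, and the lemma becomes a statement about these two piecewise-linear functions.

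For the ``only if'' direction I would argue by contraposition. Suppose $|x_0|>\tfrac b2$. Pick $\varepsilon\in\{-1,1\}$ with $\varepsilon x_0>0$ and pass to the solution whose $x$-coordinate is $x_0-\varepsilon b$: if $\varepsilon x_0\ge b$ then subtracting $\varepsilon b$ strictly decreases the absolute value, while if $\tfrac b2<\varepsilon x_0<b$ then $|x_0-\varepsilon b|=b-\varepsilon x_0<\tfrac b2<|x_0|$. Either way some solution has strictly smaller $|x|$, so $(x_0,y_0)$ is not a d.l.s. The same reasoning with $a$ in place of $b$ shows $|y_0|\le\tfrac a2$ is also necessary. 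For the ``if'' direction, assume $|x_0|\le\tfrac b2$ and $|y_0|\le\tfrac a2$. For any $t\neq 0$ the reverse triangle inequality gives $|x_0+bt|\ge b|t|-|x_0|\ge b-\tfrac b2=\tfrac b2\ge|x_0|$, and likewise $|y_0-at|\ge a|t|-|y_0|\ge\tfrac a2\ge|y_0|$; combined with the trivial $t=0$ case this shows $|x_0|$ and $|y_0|$ are the least possible values of $|x|$ and $|y|$ among all solutions, so $(x_0,y_0)$ is a d.l.s. (uniqueness being the cited \cite[Theorem 3]{levit-amm56}).

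The only point that needs a little care — and the one I would flag as the main, if minor, obstacle — is checking that the two bounds are mutually compatible, i.e.\ that a solution satisfying \emph{both} actually exists (otherwise the equivalence, while true, would be empty). Starting from any solution and translating $t$ so the $x$-coordinate lies in $(-\tfrac b2,\tfrac b2]$ gives $|x_0|\le\tfrac b2$; then from $by_0=1-ax_0$ we get $b|y_0|\le 1+a\cdot\tfrac b2$, hence $|y_0|\le\tfrac a2+\tfrac 1b$. Since $1<a<b$ forces $b\ge 3$, the slack $\tfrac 1b<\tfrac 12$ together with the integrality of $y_0$ forces $|y_0|\le\tfrac a2$, so such a solution exists. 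Everything else is immediate from the coset description of the solution set.
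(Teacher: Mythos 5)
Your proof is correct. Note that the paper does not actually prove this lemma --- it is quoted verbatim from Levit's 1956 note and used as a black box --- so there is no internal argument to compare against; your write-up is, in effect, a self-contained replacement for the citation. The three ingredients all check out: the coset description $\{(x_0+bt,\,y_0-at)\colon t\in\Z\}$ of the solution set is the standard consequence of $\gcd(a,b)=1$; the contrapositive for ``only if'' correctly handles both subcases $\varepsilon x_0\ge b$ and $\tfrac b2<\varepsilon x_0<b$; and the reverse-triangle-inequality estimate $|x_0+bt|\ge b|t|-|x_0|\ge\tfrac b2\ge|x_0|$ for $t\neq0$ settles ``if.'' Your closing compatibility check is not strictly required for the stated equivalence, but it is a genuine addition rather than padding: the definition of a d.l.s.\ asks that $|u|$ and $|v|$ be \emph{simultaneously} minimal, and it is not a priori clear that the two minima are attained by the same solution, so verifying that a solution in the box $|x_0|\le\tfrac b2$, $|y_0|\le\tfrac a2$ exists (via the $|y_0|\le\tfrac a2+\tfrac1b$ estimate plus integrality) is exactly what justifies the paper's standing convention that ``$(u,v)$ \emph{is} the d.l.s.'' One tiny presentational remark: in the case $\varepsilon x_0\ge b$ you should say the absolute value decreases because $x_0-\varepsilon b$ has the same sign as $x_0$ (or is zero), i.e.\ $|x_0-\varepsilon b|=\varepsilon x_0-b<\varepsilon x_0$; as written the claim is right but the one-line justification is worth including.
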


\begin{lemma}\cite[Lemma 1]{rosales-laa05}\label{le_rosales-laa05-le1}
Let $x\in \Z$. Then $x\notin \langle a, b\rangle$ if and only if $x = ab-\ell a-kb$ for some $\ell, k\in \N\setminus \{0\}$.
\end{lemma}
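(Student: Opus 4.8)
The plan is to route everything through the canonical representation of integers modulo $b$. Since $\gcd(a,b)=1$, the set $\{0,a,2a,\ldots,(b-1)a\}$ is a complete residue system modulo $b$, so every $x\in\Z$ can be written in exactly one way as $x=\alpha a+\beta b$ with $\alpha\in[0,b-1]$ and $\beta\in\Z$; call this the canonical form of $x$. The core of the argument is the auxiliary claim that, in the canonical form, $x\in\langle a,b\rangle$ if and only if $\beta\ge 0$. The ``if'' direction is immediate because $\alpha\in[0,b-1]\subseteq\N$ and $\beta\in\N$. For ``only if'', start from any representation $x=\alpha' a+\beta' b$ with $\alpha',\beta'\in\N$, divide $\alpha'$ by $b$ to write $\alpha'=qb+r$ with $0\le r\le b-1$, and observe that $x=ra+(\beta'+qa)b$; by uniqueness of the canonical form this forces $\alpha=r$ and $\beta=\beta'+qa\ge 0$.

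Granting the claim, the lemma follows quickly. If $x\notin\langle a,b\rangle$, then the canonical form has $\beta\le -1$; setting $\ell=b-\alpha$ and $k=-\beta$ gives $\ell\ge b-(b-1)=1$ and $k\ge 1$, while $ab-\ell a-kb=ab-(b-\alpha)a-(-\beta)b=\alpha a+\beta b=x$, as desired. Conversely, suppose $x=ab-\ell a-kb$ with $\ell,k\in\N\setminus\{0\}$ and, for contradiction, that $x=\alpha'a+\beta'b$ with $\alpha',\beta'\in\N$. Then $ab=(\alpha'+\ell)a+(\beta'+k)b$ with $\alpha'+\ell\ge 1$ and $\beta'+k\ge 1$. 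Reducing modulo $a$ and using $\gcd(a,b)=1$ yields $a\mid\beta'+k$, hence $\beta'+k\ge a$; but then $(\alpha'+\ell)a=ab-(\beta'+k)b\le ab-ab=0$, contradicting $\alpha'+\ell\ge 1$. Therefore $x\notin\langle a,b\rangle$.

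I do not expect a serious obstacle, since the content is elementary; the step requiring the most care is the auxiliary claim, precisely the point where one must argue that the sign of the ``$b$-coefficient'' does not depend on which nonnegative representation one begins with, which is exactly where coprimality of $a$ and $b$ and the uniqueness of the canonical form are used. An alternative packaging would phrase the claim in terms of $\text{Ap}(\langle a,b\rangle, b)=\{0,a,\ldots,(b-1)a\}$, but the complete-residue-system formulation seems the cleanest and keeps the proof self-contained. (As a sanity check, taking $\ell=k=1$ recovers $ab-a-b\notin\langle a,b\rangle$.)
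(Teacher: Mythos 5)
Your proof is correct. Note that the paper does not actually prove this lemma: it is quoted verbatim from Rosales \cite[Lemma 1]{rosales-laa05} and used as a black box, so there is no internal proof to compare against. Your argument is a clean, self-contained derivation: the canonical form $x=\alpha a+\beta b$ with $\alpha\in[0,b-1]$ is legitimate because $\{0,a,\ldots,(b-1)a\}$ is a complete residue system modulo $b$, the reduction $\alpha'=qb+r$ correctly shows membership in $\langle a,b\rangle$ forces $\beta\ge 0$, and both directions of the equivalence then follow by the substitutions $\ell=b-\alpha$, $k=-\beta$ and the divisibility argument $a\mid\beta'+k$. Every step checks out, including the boundary cases $\ell\ge 1$ (from $\alpha\le b-1$) and $\beta'+k\ge a$ (from $\beta'+k\ge 1$ and $a\mid\beta'+k$). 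For comparison, the most common packaging of this fact in the numerical-semigroup literature goes through the symmetry of $\langle a,b\rangle$ together with $F(\langle a,b\rangle)=ab-a-b$: for $x\in\Z$ one has $x\notin\langle a,b\rangle$ iff $ab-a-b-x=\lambda a+\mu b$ with $\lambda,\mu\in\N$, which is exactly $x=ab-(\lambda+1)a-(\mu+1)b$. That route is shorter but imports symmetry as a prerequisite; yours trades a few extra lines for complete self-containment, and the two are equally rigorous.
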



\section{Combinatorial results}


Our first aim in this section is to prove Theorem \ref{th_card-of-iso-gaps-set}, which determines an exact formula for the number of isolated gaps of a numerical semigroup of embedding dimension two.
 
\vspace{0.2mm}
Given a numerical semigroup $S$, we define \[T(S) \colonequals \{s\in N(S) \colon  s-1,s+1 \notin S\}.\]

\vspace{0.5mm}
The following proposition shows that the sets $I(S)$ and $T(S)$ have the same cardinality if the numerical semigroup $S$ is symmetric.

\begin{proposition}\label{pr_card-of-IS-NS-are-same}
If $S$ is a symmetric numerical semigroup, then $\# I(S)= \# T(S)$.
\end{proposition}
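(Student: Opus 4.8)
The plan is to produce an explicit bijection $I(S) \to T(S)$ given by $x \mapsto F(S) - x$, with the symmetry of $S$ doing all the work. (If $S = \N$ there are no gaps and both sets are empty, so we may assume $S \neq \N$, hence $F(S) \geq 1$.) First I would record the equivalence I need: for a symmetric numerical semigroup $S$ and every integer $z$, one has $z \in S$ if and only if $F(S) - z \notin S$. The forward implication holds for any numerical semigroup, because if $z$ and $F(S) - z$ both lay in $S$ then so would their sum $F(S)$, contradicting $F(S) \notin S$; the reverse implication is exactly the definition of symmetry.

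Next I would define $\phi \colon I(S) \to \Z$ by $\phi(x) = F(S) - x$ and check that $\phi(x) \in T(S)$ whenever $x \in I(S)$. Since $x$ is a gap we have $x \notin S$, hence $\phi(x) \in S$ by the equivalence; and since $0 \in S$ forces $x \geq 1$, we get $\phi(x) \leq F(S) - 1 < F(S)$, so $\phi(x) \in N(S)$. From $x - 1 \in S$ and $x + 1 \in S$, applying the equivalence to $z = x - 1$ and $z = x + 1$ yields $\phi(x) + 1 = F(S) - (x - 1) \notin S$ and $\phi(x) - 1 = F(S) - (x + 1) \notin S$; hence $\phi(x) \in T(S)$.

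Symmetrically, I would define $\psi \colon T(S) \to \Z$ by $\psi(s) = F(S) - s$ and verify $\psi(s) \in I(S)$ for $s \in T(S)$: from $s \in S$ the equivalence gives $F(S) - s \notin S$, and from $s < F(S)$ we get $1 \leq F(S) - s \leq F(S)$, so $F(S) - s \in G(S)$; and from $s \pm 1 \notin S$ the equivalence gives $(F(S) - s) \mp 1 \in S$, so both neighbours of $F(S) - s$ lie in $S$ and $F(S) - s \in I(S)$. Finally $\phi$ and $\psi$ are mutually inverse, since $F(S) - (F(S) - n) = n$ for every integer $n$, so $\phi$ is a bijection and $\# I(S) = \# T(S)$.

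I do not expect a genuine obstacle here: the argument amounts to checking that the reflection $n \mapsto F(S) - n$ interchanges $I(S)$ and $T(S)$, which is immediate once the right ``if and only if'' form of symmetry is in hand. The only point requiring care is the bookkeeping that matches the strict inequality $s < F(S)$ in the definition of $N(S)$ with the fact that a gap $x$ satisfies $x \geq 1$, so that $\phi$ and $\psi$ genuinely land in $T(S)$ and $I(S)$ respectively rather than in slightly larger sets.
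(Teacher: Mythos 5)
Your proof is correct and takes essentially the same route as the paper: the paper defines the map $\alpha\colon T(S)\to I(S)$, $\alpha(x)=F(S)-x$, and checks it is a well-defined bijection using symmetry, which is exactly your reflection argument run in the opposite direction with the inverse written out explicitly. The ``if and only if'' form of symmetry you isolate is implicit in the paper's verification, so there is no substantive difference.
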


\begin{proof}[{\bf Proof}]
Define a function $\alpha\colon T(S) \to I(S)$ by $\alpha(x)= F(S) -x$. First, we show that $\alpha(x)\in I(S)$ whenever $x\in T(S)$. Let $x\in T(S)$. Then $\alpha(x) = F(S)-x\in G(S)$. Now since $x-1, x+1\notin S$ and $S$ is symmetric, we obtain $F(S)- (x-1), F(S)-(x+1)\in S$. Thus $F(S)-x\in I(S)$, and so $\alpha(x)\in I(S)$.

\vspace{0.2mm}
It is clear from the definition that $\alpha$ is injective. Finally, we show that $\alpha$ is surjective. Let $y\in I(S)$. Since $S$ is symmetric, it follows that $F(S)- y \in N(S)$. Now since $y-1, y+1\in S$, we obtain $F(S)- (y-1), F(S)- (y+1)\notin S$. Thus $F(S)-y\in T(S)$. Moreover, observe that $\alpha\left(F(S)-y\right) = y$, as desired.
\end{proof}


\begin{notation}
Throughout the rest of the paper, we will use the symbol $\alpha$ to denote the bijection defined as in the proof of Proposition \ref{pr_card-of-IS-NS-are-same}.
\end{notation}


The following lemma determines the cardinality of the finite set $T(\langle a, b \rangle)$.
\begin{lemma}\label{le_card-of-the-subset-of-NS}
If $S= \langle a,b\rangle$, then $\# T(S) = |uv|$.
\end{lemma}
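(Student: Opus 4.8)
The plan is to analyze $T(S) = \{s \in N(S) : s-1, s+1 \notin S\}$ directly in terms of the arithmetic of $a$ and $b$. By Lemma~\ref{le_rosales-laa05-le1}, an integer $x$ fails to lie in $\langle a,b\rangle$ precisely when $x = ab - \ell a - k b$ with $\ell, k \geq 1$; in particular the gaps are exactly these numbers, and they all lie in $[1, ab-a-b] = [1, F(S)]$. For $s \in T(S)$, both neighbors $s-1$ and $s+1$ are gaps, so I would write $s-1 = ab - \ell_1 a - k_1 b$ and $s+1 = ab - \ell_2 a - k_2 b$ for suitable positive integers. Subtracting gives $2 = (\ell_1 - \ell_2)a + (k_1 - k_2)b$, so $(\ell_1 - \ell_2, k_1 - k_2)$ is an integer solution of $ax + by = 2$, i.e. twice a solution of $ax+by=1$. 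The general solution of $ax+by=1$ is $(u + tb,\ v - ta)$ for $t \in \Z$, where $(u,v)$ is the d.l.s.; since $\gcd(a,b)=1$ and $a \geq 2$, $b \geq 3$, the shift $t=0$ is forced when we also demand $s$ (the number strictly between) be a bona fide semigroup element of $N(S)$ with the gap-representations having positive coefficients. So I expect exactly $s - 1 = ab - \ell a - kb$ and $s+1 = ab - (\ell - u)a - (k - v)b$ (or with signs arranged so both stay positive), forcing a rigid relationship on $(\ell, k)$.

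**Counting via a lattice-point / residue argument.** The cleaner route, and the one I would actually pursue, is to parametrize $N(S)$ itself. Since $S = \langle a,b\rangle$ is symmetric with Frobenius number $ab-a-b$, and $\#N(S) = \#G(S) = \frac{(a-1)(b-1)}{2}$, one knows $N(S) = \{ia + jb : 0 \leq i, \ 0 \leq j, \ ia + jb < ab - a - b\}$ with the representation $(i,j)$ unique when we take $0 \le j \le a-1$ (this is essentially the Apéry-set description: $\mathrm{Ap}(S,a) = \{jb \bmod \text{stuff}\}$, giving every element of $S$ a unique form $ia + jb$ with $0 \le j < a$). The condition $s \in T(S)$ then says: $s \in N(S)$, and both $s-1$ and $s+1$ are gaps. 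I would translate "$s \pm 1 \notin S$" using Lemma~\ref{le_rosales-laa05-le1} and the uniqueness of representations, reducing the count of such $s$ to counting pairs $(\ell, k)$ with $1 \le \ell$, $1 \le k$, and simultaneously $1 \le \ell - u$, $1 \le k - v$ (after choosing orientations so that $s-1$ and $s+1$ get representations differing by $(u,v)$). Using Lemma~\ref{le_iff-condition-for-dls}, $|u| \le b/2$ and $|v| \le a/2$, and exactly one of $u,v$ is negative (since $au+bv=1>0$ with $1 < a < b$ forces, after checking small cases, $u > 0 > v$ — here $u \in [1, \frac{b}{2}]$ and $v \in [-\frac{a}{2}, -1]$, so $|u| = u$ and $|v| = -v$). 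The number of admissible $(\ell,k)$ in the relevant box then works out to exactly $u \cdot (-v) = |uv|$.

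**Organizing the bijection explicitly.** Concretely, I would define a map sending $s \in T(S)$ to the pair $(\ell, k)$ where $s + 1 = ab - \ell a - k b$ is the representation with $\ell, k \geq 1$ chosen so that $s - 1 = ab - (\ell + u)a - (k + v)b$ also has positive coefficients — here I need $u < 0$ or $v < 0$ appropriately so that adding $(u,v)$ decreases one coordinate; since $v < 0$, $k + v = k - |v|$, and since $u > 0$, $\ell + u$ grows. Checking feasibility: I need $\ell + u \geq 1$ (automatic) and $k + v \geq 1$, i.e. $k \geq |v| + 1$. Running the complementary case with $s-1$ and $s+1$ swapped covers the other sign pattern. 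The upshot is a clean counting identity, and I would verify it is a bijection onto $\{(\ell, k) : 1 \le \ell \le |u|,\ 1 \le k \le |v|\}$ or a translate thereof, which has exactly $|uv|$ elements.

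**Main obstacle.** The genuine difficulty is the uniqueness/well-definedness: for a given $s \in T(S)$, the gaps $s-1$ and $s+1$ each have *many* representations $ab - \ell a - kb$, and I must pin down the *right* one so that the two representations differ by exactly $(u, v)$ rather than $(u + tb, v - ta)$ for some nonzero $t$. This is where symmetry, the bound $s < F(S)$, and the d.l.s. property (Lemma~\ref{le_iff-condition-for-dls}) must be combined carefully — the point being that if $t \neq 0$ then one of the two gaps would be pushed outside $[1, F(S)]$ or one of the coefficients would become nonpositive, contradicting that it is a genuine gap lying strictly between two elements of $N(S)$. Getting this rigidity argument airtight, including the boundary cases where $s$ is near $0$ or near $F(S)$, is the crux; once it is in place, the count $|uv|$ falls out by inspecting the feasible range of the remaining free parameters.
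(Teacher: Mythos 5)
Your proposal does not close, for two reasons. First, your parenthetical claim that the d.l.s.\ necessarily satisfies $u>0>v$ is false: for $S=\langle 7,11\rangle$ the d.l.s.\ is $(-3,2)$ (one of the paper's own examples), so $u<0<v$ occurs. Exactly one of $u,v$ is negative, but which one depends on $a$ and $b$, so both sign patterns must be argued (as the paper does in its Cases 1 and 2); fixing one pattern "after checking small cases" and gesturing at a complementary case built on a false premise is not a proof. Second, and more seriously, you have left what you yourself call the crux unproved: each gap $s\pm1$ has many representations $ab-\ell a-kb$, and your bijection onto a box of size $|uv|$ requires showing that the two chosen representations differ by exactly $(u,v)$ and not by $(u+tb,\,v-ta)$ for some $t\neq 0$. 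You describe why this rigidity "should" hold but never establish it, so there is no well-defined map, no bijection, and no count. As written the proposal is a plan with its hardest step deferred.

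The paper's proof sidesteps this difficulty entirely by parametrizing $s$ itself rather than its neighbours. Writing $s=a\lambda_1+b\lambda_2\in S$ and using $au+bv=1$ gives $s\mp 1=a(\lambda_1\mp u)+b(\lambda_2\mp v)$, and one shows in each sign case that $s-1,s+1\notin S$ if and only if $\lambda_1<|u|$ and $\lambda_2<|v|$: the forward direction is immediate (otherwise one of $s\pm1$ is exhibited with nonnegative coefficients), and the converse uses Lemma~\ref{le_iff-condition-for-dls} to get $\lambda_1+|u|<b$ and $\lambda_2+|v|<a$ and then Lemma~\ref{le_rosales-laa05-le1} to certify non-membership. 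Since $|v|\le a/2<a$, the representation of $s$ with $\lambda_2<a$ is unique, so the admissible pairs $(\lambda_1,\lambda_2)$ are counted exactly once and the total is $|u|\cdot|v|$. If you wish to salvage your route through the gap representations of $s\pm1$, you must actually prove the postponed rigidity statement, and doing so essentially reduces to the computation the paper performs anyway.
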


\begin{proof}[{\bf Proof}]
Let $s \in T(S)$. Then $s = a\lambda_1+b\lambda_2$ for some $\lambda_1,\lambda_2 \in \mathbb{N}$. Since $1=au+bv$, we deduce that $s-1 = a(\lambda_1-u) +b(\lambda_2-v)$ and $s+1 = a(\lambda_1+u) +b(\lambda_2+v)$. Note that exactly one of the integers $u$ and $v$ is negative.
	
	
\vspace{0.2mm}
\noindent {Case 1:} Suppose $u <0$. Write $u'=-u$. Then $s-1 = a(\lambda_1+u') +b(\lambda_2-v)$ and $s+1 = a(\lambda_1-u') +b(\lambda_2+v)$. Since $s-1, s+1 \notin S$, it follows that $\lambda_2 <v$ and $\lambda_1 <u'$. 

We now show that if $\lambda_1 <u'$ and $\lambda_2 <v$, then $s-1, s+1 \notin S$. Since $u'\leq \frac{b}{2}$ and $v \leq \frac{a}{2}$ by Lemma \ref{le_iff-condition-for-dls}, we obtain $\lambda_1+u' < 2u' \leq b$ and $\lambda_2+v < 2v \leq a$. Put $\ell=a-(\lambda_2+v)$, $k=b-(\lambda_1+u')$, $r=u'-\lambda_1$, and $t=v-\lambda_2$. Then
$s-1 =a(b-k) + b(-t)= ab -ak-bt$ and $s+1= a(-r) + b(a-\ell) = ab- ar -b\ell$.
Since $\ell,k,r,t\in \N\setminus \{0\}$, we obtain $s-1, s+1 \notin S$ by Lemma \ref{le_rosales-laa05-le1}.

\vspace{0.2mm}
	\noindent {Case 2:} Suppose $v<0$. Write $v'=-v$. Then $s-1 = a(\lambda_1-u)+ b(\lambda_2+v')$ and $s+1 = a(\lambda_1+u)+ b(\lambda_2-v')$. Since $s-1, s+1 \notin S$, it follows that $\lambda_1 <u$ and $\lambda_2 <v'$. By using the similar argument as in Case 1, we can readily show that if $\lambda_1 <u$ and $\lambda_2 <v'$, then $s-1, s+1 \notin S$.

\vspace{0.2mm}
In each case, we have $s-1, s+1 \notin S$ whenever $\lambda_1<|u|$ and $\lambda_2 <|v|$. Hence we conclude that $\# T(S) = |u||v| = |uv|$.
\end{proof}

Recall that every numerical semigroup of embedding dimension two is symmetric. By combining Proposition \ref{pr_card-of-IS-NS-are-same} and Lemma \ref{le_card-of-the-subset-of-NS}, we thus obtain the following theorem.

\begin{theorem}\label{th_card-of-iso-gaps-set}
If $S= \langle a,b\rangle$, then $\# I(S)=|uv|$. 
\end{theorem}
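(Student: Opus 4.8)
The theorem is an immediate corollary of the two results just stated, so my plan is essentially to record the assembly.

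The plan is to observe that the hard work has already been done in Proposition~\ref{pr_card-of-IS-NS-are-same} and Lemma~\ref{le_card-of-the-subset-of-NS}, and that the only additional ingredient needed is the classical fact (recalled in Section~2, \cite[Corollary 4.7]{rosales-b09}) that every numerical semigroup of embedding dimension two is symmetric. First I would note that $S=\langle a,b\rangle$ with $1<a<b$ and $\gcd(a,b)=1$ has embedding dimension two, since $a$ and $b$ are both needed in any generating set (neither is a multiple of the other as $1<a<b$ and $\gcd(a,b)=1$, and no proper submonoid generated by one element of $\N$ has finite complement unless that element is $1$). Hence $S$ is symmetric.

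Next I would apply Proposition~\ref{pr_card-of-IS-NS-are-same} to this symmetric $S$ to get $\#I(S)=\#T(S)$, and then apply Lemma~\ref{le_card-of-the-subset-of-NS} to get $\#T(S)=|uv|$. Chaining the two equalities yields $\#I(S)=|uv|$, which is exactly the claim. Concretely:
\begin{proof}[{\bf Proof}]
Since $1<a<b$ and $\gcd(a,b)=1$, the numerical semigroup $S=\langle a,b\rangle$ has embedding dimension two, and hence is symmetric (cf. \cite[Corollary 4.7]{rosales-b09}). By Proposition~\ref{pr_card-of-IS-NS-are-same}, $\#I(S)=\#T(S)$, and by Lemma~\ref{le_card-of-the-subset-of-NS}, $\#T(S)=|uv|$. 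Therefore $\#I(S)=|uv|$.
\end{proof}

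There is essentially no obstacle here: the proposition supplies the bijection $\alpha\colon T(S)\to I(S)$ for symmetric $S$, the lemma counts $T(S)$ directly via the d.l.s.\ $(u,v)$, and symmetry of embedding-dimension-two semigroups is standard. The only thing to be careful about is making explicit that $\langle a,b\rangle$ genuinely has embedding dimension two (so that the symmetry fact applies), but this is routine. So I expect the proof to be one or two sentences combining the earlier results.
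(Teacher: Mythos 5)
Your proof is correct and follows exactly the paper's route: the paper likewise obtains the theorem immediately by combining Proposition~\ref{pr_card-of-IS-NS-are-same} and Lemma~\ref{le_card-of-the-subset-of-NS}, after recalling that every numerical semigroup of embedding dimension two is symmetric. Your extra remark justifying that $\langle a,b\rangle$ genuinely has embedding dimension two is a harmless (and reasonable) addition the paper leaves implicit.
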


We illustrate Theorem \ref{th_card-of-iso-gaps-set} with the next two examples.
\begin{example}\label{ex-1_card-of-IS}
Consider the numerical semigroup $S=\langle 9,13 \rangle$. Observe that $(3, -2)$ is a solution of the equation $9x+13y = 1$. Moreover, the solution $(3, -2)$ is the d.l.s. by Lemma \ref{le_iff-condition-for-dls}. Therefore $u = 3$ and $v = -2$. By using Theorem \ref{th_card-of-iso-gaps-set}, we thus obtain $\# I(S) = |uv|= 6$. Indeed, a modest computation confirms that $I(S) =\{64,73,77,82,86,95\}$.
\end{example}

\begin{example}
Consider the numerical semigroup $S=\langle 8,13 \rangle$. Observe that $(5, -3)$ is a solution of the equation $9x+13y = 1$. Moreover, the solution $(5, -3)$ is the d.l.s. by Lemma \ref{le_iff-condition-for-dls}. Therefore $u = 5$ and $v = -3$. By using Theorem \ref{th_card-of-iso-gaps-set}, we thus obtain $\# I(S) = |uv|= 15$. Indeed, a modest computation confirms that $I(S) =\{25, 33, 38, 41, 46, 49, 51, 54, 57, 59, 62, 67, 70, 75, 83\}$.
\end{example}


Let $S$ be a non-perfect numerical semigroup, and let $m\in S\setminus \{0\}$. For each $i\in [1, m-1]$, Smith \cite[p. 124]{smith-tjm22} defined the set $I_{i,m}(S) \colonequals \{s\in I(S)\colon s \equiv i \pmod m\}$. It is easy to verify that the collection $\left\{I_{i,m}(S)\colon I_{i,m}(S) \neq \varnothing,\; i\in [1, m-1]\right\}$ is a partition of $I(S)$. Smith \cite[Theorem 2.7]{smith-tjm22} proved that the partition \newline
$\left\{I_{i,a}(\langle a,b \rangle)\colon I_{i,a}(\langle a,b \rangle) \neq \varnothing,\; i\in [1, a-1]\right\}$ of $I(\langle a,b \rangle)$ is uniform.

\vspace{0.5mm}
To determine the cardinality of $\left\{I_{i,a}(\langle a,b \rangle)\colon I_{i,a}(\langle a,b \rangle) \neq \varnothing,\; i\in [1, a-1]\right\}$, we require the next two lemmas.

\begin{lemma}\label{le_partition-of-TS}
Let $S$ be a symmetric numerical semigroup such that $I(S)\neq \varnothing$, and let $m\in S\setminus \{0\}$. Then:
\begin{enumerate}
\item[\rm(i)] $\left\{\alpha^{-1}(I_{i,m}(S))\colon I_i(S)\neq \varnothing,\; i\in [1, m-1] \right\}$ is a partition of $T(S)$,

\item[\rm(ii)] any two elements of $\alpha^{-1}(I_{i,m}(S))$ are congruent modulo $m$.
\end{enumerate}
\end{lemma}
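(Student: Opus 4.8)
The plan is to deduce both parts of Lemma~\ref{le_partition-of-TS} from the already-established properties of the bijection $\alpha\colon T(S)\to I(S)$ together with the structural facts about the sets $I_{i,m}(S)$ recalled just before the lemma. For part~(i), recall that $\alpha$ is a bijection by Proposition~\ref{pr_card-of-IS-NS-are-same}, and that the collection $\{I_{i,m}(S)\colon I_{i,m}(S)\neq\varnothing,\ i\in[1,m-1]\}$ is a partition of $I(S)$ (this is the ``easy to verify'' remark in the text, valid for any non-perfect numerical semigroup). The key observation is that applying the bijection $\alpha^{-1}$ to every block of a partition of $I(S)$ produces a partition of $T(S)$: preimages under a bijection of pairwise disjoint sets are pairwise disjoint, preimages of nonempty sets are nonempty, and the union of the preimages is the preimage of the union, which is $\alpha^{-1}(I(S))=T(S)$. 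So part~(i) is essentially immediate once one notes that a non-perfect symmetric $S$ with $I(S)\neq\varnothing$ does have $I(S)$ partitioned this way, and that the index condition $I_{i,m}(S)\neq\varnothing$ is exactly what makes the blocks nonempty. (I would flag the minor typo that the lemma writes $I_i(S)$ where $I_{i,m}(S)$ is meant.)

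For part~(ii), I would argue directly from the formula $\alpha(x)=F(S)-x$. Take two elements $x_1,x_2\in\alpha^{-1}(I_{i,m}(S))$. Then $\alpha(x_1)=F(S)-x_1$ and $\alpha(x_2)=F(S)-x_2$ both lie in $I_{i,m}(S)$, so by definition $F(S)-x_1\equiv i\equiv F(S)-x_2\pmod m$. Subtracting, $x_1\equiv x_2\pmod m$. That is the whole content. One can phrase it more slickly by saying $\alpha$ is, modulo $m$, the map ``negate and translate by $F(S)$,'' hence it carries a single residue class mod $m$ (restricted to $I(S)$) to a single residue class mod $m$ (restricted to $T(S)$); concretely, every element of $\alpha^{-1}(I_{i,m}(S))$ is congruent to $F(S)-i$ modulo $m$.

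There is essentially no obstacle here: the lemma is a bookkeeping statement about transporting a partition across the explicit bijection $\alpha$, and both parts follow from the definition of $\alpha$ and elementary set theory. If anything requires a word of care, it is confirming that the hypotheses ``$S$ symmetric'' and ``$I(S)\neq\varnothing$'' suffice for $\alpha$ to be a well-defined bijection and for the partition of $I(S)$ to be available — but symmetry gives the bijection via Proposition~\ref{pr_card-of-IS-NS-are-same}, and $I(S)\neq\varnothing$ (together with $m\in S\setminus\{0\}$, so $m\geq 1$) is precisely the non-perfectness needed for the $I_{i,m}(S)$ partition. I would therefore present the proof in two short paragraphs mirroring (i) and (ii), keeping the computation in (ii) to the single congruence displayed above.
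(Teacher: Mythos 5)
Your proof is correct and follows essentially the same route as the paper: part (i) by transporting the partition of $I(S)$ across the bijection $\alpha$ using the compatibility of inverse images with unions and disjointness, and part (ii) by the same two-line congruence computation $F(S)-x_1\equiv F(S)-x_2\pmod m$. The extra care you take in spelling out why the preimage blocks are nonempty and why the hypotheses suffice is a slight elaboration of the paper's terser argument, not a different method.
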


\begin{proof}[{\bf Proof}]
Note that $\alpha\colon T(S)\to I(S)$ is a bijection. 

\begin{enumerate}
 \item[\rm(i)] This follows from the facts that the inverse image of a finite union (resp. intersection) equals the union (resp. intersection) of the inverse images.

\item[\rm(ii)] Let $x, y\in\alpha^{-1}(I_{i,m}(S))$. Then $\alpha(x), \alpha(y)\in I_{i,m}(S)$, and so $\alpha(x)\equiv \alpha(y) \pmod m$. Then, by the definition of $\alpha$ we obtain $F(S)- x\equiv F(S)- y \pmod m$. This implies that $x\equiv y \pmod m$, as desired.
\end{enumerate}

\end{proof}


\begin{lemma}\label{le_no-of-cells-in TS}
If $S = \langle a,b \rangle$, then $\#\left\{\alpha^{-1}(I_{i,a}(S))\colon I_{i,a}(S)\neq \varnothing,\; i\in [1, a-1] \right\} = |v|$.
\end{lemma}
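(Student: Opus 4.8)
The plan is to transport the partition of $I(S)$ by residues modulo $a$ across the bijection $\alpha$, to identify the resulting blocks of $T(S)$ with the residue classes modulo $a$ that are actually represented in $T(S)$, and then to count those classes using the explicit description of $T(S)$ extracted from the proof of Lemma~\ref{le_card-of-the-subset-of-NS}.

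First I would invoke Lemma~\ref{le_partition-of-TS}: since $\langle a,b\rangle$ is symmetric, $a\in S\setminus\{0\}$, and $I(S)\neq\varnothing$ (embedding dimension two is never perfect, as recalled in the introduction), the nonempty members of $\{\alpha^{-1}(I_{i,a}(S))\colon i\in[1,a-1]\}$ form a partition of $T(S)$, and each such block is contained in a single residue class modulo $a$. The next point is that distinct nonempty blocks sit in distinct residue classes modulo $a$: if $x\in\alpha^{-1}(I_{i,a}(S))$ and $y\in\alpha^{-1}(I_{j,a}(S))$ satisfy $x\equiv y\pmod a$, then applying $\alpha$ gives $F(S)-x\equiv F(S)-y\pmod a$, hence $i\equiv j\pmod a$ and therefore $i=j$ because $i,j\in[1,a-1]$. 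Consequently $\#\{\alpha^{-1}(I_{i,a}(S))\colon I_{i,a}(S)\neq\varnothing\}$ equals the number of residue classes modulo $a$ that meet $T(S)$.

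It then remains to count those classes. From the proof of Lemma~\ref{le_card-of-the-subset-of-NS} (after writing $u'=-u$ in Case~1, or $v'=-v$ in Case~2), $T(S)$ is precisely the set of integers of the form $a\lambda_1+b\lambda_2$ with $0\le\lambda_1<|u|$ and $0\le\lambda_2<|v|$. Since $\gcd(a,b)=1$, any two representations $a\lambda_1+b\lambda_2=a\lambda_1'+b\lambda_2'$ force $\lambda_2\equiv\lambda_2'\pmod a$, so every such element has a well-defined residue $b\lambda_2\bmod a$ modulo $a$, and the set of residues realized by $T(S)$ is exactly $\{b\lambda_2\bmod a\colon 0\le\lambda_2<|v|\}$. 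Because $\gcd(a,b)=1$ and $|v|\le\frac{a}{2}<a$ by Lemma~\ref{le_iff-condition-for-dls}, the map $\lambda_2\mapsto b\lambda_2\bmod a$ is injective on $\{0,1,\ldots,|v|-1\}$, so there are exactly $|v|$ such residues. Combining this with the previous paragraph yields $\#\{\alpha^{-1}(I_{i,a}(S))\colon I_{i,a}(S)\neq\varnothing\}=|v|$.

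I expect the only real care to be needed in the last paragraph: one should be explicit that $\lambda_2\bmod a$ is determined by the element $s$ itself and not merely by a chosen representation, and that it is the bound $|v|\le\frac{a}{2}$ (rather than anything else) that prevents two distinct admissible values of $\lambda_2$ from producing the same residue. Everything else is a routine transfer of structure along the bijection $\alpha$; in particular this route does not even require Smith's uniformity result, and it simultaneously re-confirms that each block has size $|u|$.
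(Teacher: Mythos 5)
Your proof is correct and follows essentially the same route as the paper's: both reduce, via Lemma \ref{le_partition-of-TS}, to counting the residue classes modulo $a$ that meet $T(S)$, and both identify these classes with the multiples $bk$ for $0\le k<|v|$. The only difference is in the final step: the paper tests each Ap\'ery element $bk$ for membership in $T(S)$ by a fresh application of Lemma \ref{le_rosales-laa05-le1}, whereas you reuse the parametrization $T(S)=\{a\lambda_1+b\lambda_2\colon 0\le\lambda_1<|u|,\ 0\le\lambda_2<|v|\}$ already extracted in the proof of Lemma \ref{le_card-of-the-subset-of-NS} and count the distinct residues $b\lambda_2\bmod a$ directly --- a mild economy, and your explicit check that distinct nonempty blocks occupy distinct residue classes fills in a step the paper leaves implicit.
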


\begin{proof}[{\bf Proof}]
Note that $\text{Ap}(S,a) = \{0,b,\cdots,(a-1)b\}$ (cf. \cite[p. 10]{rosales-b09}). Moreover, the Ap\'ery set $\text{Ap}(S,a)$ is a complete 
residue system modulo $a$ (cf. \cite[Theorem 2.6]{niven-b91}). Thus, by Lemma \ref{le_partition-of-TS} it suffices to count elements in $T(S)$ that has the form $bk$ for some $k\in \{0, \ldots, a-1\}$.

	\vspace{0.2mm}
Let $bk\in T(S)$. Since $1 = au+bv$, we deduce that
	$bk+1 = au+b(k+v)$ and $bk-1 = -au+b(k-v)$. From Lemma \ref{le_iff-condition-for-dls}, we know that $|u|< b$ and $|v|< a$. Note that exactly one of $u$ and $v$ is negative.

	\vspace{0.2mm}
	\noindent {Case 1:} Suppose $u<0$. Write $u'=-u$ and $\ell= b-u'$. Then 
	$bk-1= a(b-\ell)+ b(k-v) = ab-a\ell -b(v-k)$. Since  $bk-1\notin S$ and $\ell \in \N\setminus \{0\}$, it follows that $k<v$ by Lemma \ref{le_rosales-laa05-le1}.
	
	We now show that if $k<v$, then $bk+1\notin S$. Put $t= a-(k+v)$. Observe that $k+v < 2v \le a$, and so $t\in \N\setminus \{0\}$.  Then
	$bk+1 = -au'+ b(a-t) = ab-au'-at$. Since $u', t\in \N\setminus \{0\}$, we obtain $bk+1\notin S$ by Lemma \ref{le_rosales-laa05-le1}.
	
	\vspace{0.2mm}
	\noindent {Case 2:} Suppose $v<0$. Write $v'=-v$ and $\ell = b-u$. Then 
	$bk+1 = a(b-\ell)+ b(k-v') = ab-a\ell -b(v'-k)$. Since  $bk+1\notin S$ and $\ell \in \N\setminus \{0\}$, we obtain $k<v'$ by Lemma \ref{le_rosales-laa05-le1}.
	
We now show that if $k<v'$, then $bk-1\notin S$. Put $t = a-(k+v')$. Observe that $k+v' < 2v' \le a$, and so $t\in \N\setminus \{0\}$. Then
	$bk-1 = -au+ b(a-t) = ab-au-at$. Since $u, t\in \N\setminus \{0\}$, we obtain $bk-1\notin S$ by Lemma \ref{le_rosales-laa05-le1}.

\vspace{0.2mm}
In each case, we have $k<|v|$. Hence $\#\left\{\alpha^{-1}(I_{i,a}(S))\colon I_{i,a}(S)\neq \varnothing,\; i\in [1, a-1] \right\} = |v|$.
\end{proof}


\begin{proposition}\label{pr_no-of-cells-in IS}
If $S = \langle a,b \rangle$, then $\# \left\{I_{i,a}(S)\colon I_{i,a}(S) \neq \varnothing,\; i\in [1, a-1]\right\} = |v|$.
\end{proposition}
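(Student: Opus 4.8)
The plan is to obtain this as an immediate consequence of Lemma \ref{le_no-of-cells-in TS}, using that $\alpha\colon T(S)\to I(S)$ is a bijection. First I would record that $S=\langle a,b\rangle$ has embedding dimension two, hence is symmetric and, by \cite[Corollary 4]{rosales-pmd20}, not perfect; in particular $I(S)\neq\varnothing$ and $a\in S\setminus\{0\}$, so the hypotheses of Lemmas \ref{le_partition-of-TS} and \ref{le_no-of-cells-in TS} are met with $m=a$.

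Next comes the bookkeeping. For distinct $i,j\in[1,a-1]$ we have $i\not\equiv j\pmod a$, so $I_{i,a}(S)$ and $I_{j,a}(S)$ are disjoint; hence two nonempty such sets are automatically distinct, and $\#\{I_{i,a}(S)\colon I_{i,a}(S)\neq\varnothing,\ i\in[1,a-1]\}$ is exactly the number of indices $i\in[1,a-1]$ with $I_{i,a}(S)\neq\varnothing$. Since $\alpha$ is a bijection, $I_{i,a}(S)\neq\varnothing$ if and only if $\alpha^{-1}(I_{i,a}(S))\neq\varnothing$; and the sets $\alpha^{-1}(I_{i,a}(S))$ for distinct $i$ are likewise pairwise disjoint, being the blocks of the partition of $T(S)$ furnished by Lemma \ref{le_partition-of-TS}(i). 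Therefore the number of indices $i$ with $I_{i,a}(S)\neq\varnothing$ equals $\#\{\alpha^{-1}(I_{i,a}(S))\colon I_{i,a}(S)\neq\varnothing,\ i\in[1,a-1]\}$.

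Finally I would invoke Lemma \ref{le_no-of-cells-in TS}, which asserts that this last quantity is $|v|$, and conclude $\#\{I_{i,a}(S)\colon I_{i,a}(S)\neq\varnothing,\ i\in[1,a-1]\}=|v|$. I do not anticipate any genuine obstacle: all the substantive work (the Ap\'ery-set computation identifying the elements $bk\in T(S)$ as precisely those with $k<|v|$) has already been carried out in Lemma \ref{le_no-of-cells-in TS}. The only point needing a little care is the observation that, because $\alpha$ is a bijection and distinct residue classes mod $a$ are disjoint, counting the nonempty blocks $I_{i,a}(S)$ amounts to the same thing as counting the nonempty blocks $\alpha^{-1}(I_{i,a}(S))$.
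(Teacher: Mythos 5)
Your proposal is correct and follows exactly the paper's route: the paper's own proof is the one-line observation that the claim follows from Lemma \ref{le_partition-of-TS}(i) and Lemma \ref{le_no-of-cells-in TS} because $\alpha\colon T(S)\to I(S)$ is a bijection. You simply spell out the bookkeeping (disjointness of residue classes, nonemptiness preserved under $\alpha^{-1}$, and the non-perfectness of $S$ guaranteeing $I(S)\neq\varnothing$) that the paper leaves implicit.
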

\begin{proof}[{\bf Proof}]
This follows from Lemma \ref{le_partition-of-TS}\rm(i) and Lemma \ref{le_no-of-cells-in TS}, since $\alpha \colon T(S)\to I(S)$ is a bijection.
\end{proof}

We illustrate Proposition \ref{pr_no-of-cells-in IS} with the following example.

\begin{example}\label{ex_no-of-blocks}
Consider the numerical semigroup $S =\langle 7,16 \rangle$. Observe that $(7 -3)$ is a solution of the equation $7x+16 y = 1$. Moreover, the solution $(7, -3)$ is the d.l.s. by Lemma \ref{le_iff-condition-for-dls}. Therefore $|v| = 3$. By using Proposition \ref{pr_no-of-cells-in IS}, we thus obtain $\# \left\{I_{i,7}(S)\colon I_{i,7}(S) \neq \varnothing,\; i\in [1, 6]\right\} = 3$. Indeed, a modest computation confirms that
\[I(S) = \{15,22,29,31,36,38,43,45,47,50,52,54,57,59,61,66,68,73,75,82,89\}\] and
\begin{itemize}
	\item $I_{1,7}(S)= \{15,22,29,36,43,50,57\}$
	\item $I_{2,7}(S)=\varnothing$
	\item $I_{3,7}(S)= \{31,38,45,52,59,66,73\}$
	\item $I_{4,7}(S)=\varnothing$
	\item $I_{5,7}(S)=\{ 47,54,61,68,75,82,89\}$
	\item $I_{6,7}(S)=\varnothing$.
\end{itemize}
\end{example}

Smith proved that the partition $\left\{I_{i,a}(\langle a, b\rangle)\colon I_{i,a}(\langle a, b\rangle) \neq \varnothing,\; i\in [1, a-1]\right\}$ of $I(\langle a, b\rangle)$ is uniform \cite[Theorem 2.7]{smith-tjm22}. Combining this fact with Theorem \ref{th_card-of-iso-gaps-set} and Proposition \ref{pr_no-of-cells-in IS}, we immediately obtain the following proposition. 

\begin{proposition}\label{pr_card-of-cells-in IS}
If $S = \langle a, b\rangle$, then the cardinality of each block of the partition $\left\{I_{i,a}(S)\colon I_{i,a}(S) \neq \varnothing,\; i\in [1, a-1]\right\}$ of $I(S)$ is $|u|$.
\end{proposition}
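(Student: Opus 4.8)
The plan is to read off the result by simple counting, using the three facts already in hand. Recall from Theorem~\ref{th_card-of-iso-gaps-set} that $\# I(S) = |uv|$, from Proposition~\ref{pr_no-of-cells-in IS} that the number of nonempty blocks $I_{i,a}(S)$ is $|v|$, and from \cite[Theorem~2.7]{smith-tjm22} that this partition of $I(S)$ is uniform, i.e.\ all nonempty blocks have a common cardinality, say $c$.

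First I would note that since the blocks partition $I(S)$ and there are exactly $|v|$ of them, each of size $c$, we have $\# I(S) = c\,|v|$. Then I would substitute the value $\# I(S) = |uv| = |u|\,|v|$ from Theorem~\ref{th_card-of-iso-gaps-set}, giving $c\,|v| = |u|\,|v|$. Since $v\neq 0$ (indeed $|v|\ge 1$, as $v$ is a nonzero integer because $u$ and $v$ cannot both vanish in $au+bv=1$), we may cancel $|v|$ to conclude $c = |u|$, which is exactly the claim.

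There is essentially no obstacle here: the proposition is a one-line arithmetic consequence of the three cited results, and the only thing to be careful about is ensuring the partition is genuinely nonempty (so that the uniformity statement has content and $|v|\ge 1$), which follows since $I(S)\neq\varnothing$ by \cite[Corollary~4]{rosales-pmd20} — every numerical semigroup of embedding dimension two is non-perfect. If one wanted to be maximally self-contained one could instead invoke Proposition~\ref{pr_card-of-cells-in IS}'s companion Lemma~\ref{le_no-of-cells-in TS} together with Lemma~\ref{le_card-of-the-subset-of-NS} to get the block count and total directly on $T(S)$, transporting through the bijection $\alpha$; but the cleanest route is the direct division argument above.

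\begin{proof}[{\bf Proof}]
By \cite[Theorem~2.7]{smith-tjm22}, the partition $\left\{I_{i,a}(S)\colon I_{i,a}(S) \neq \varnothing,\; i\in [1, a-1]\right\}$ of $I(S)$ is uniform; let $c$ denote the common cardinality of its blocks. Since $S$ has embedding dimension two, $S$ is non-perfect, so $I(S)\neq \varnothing$ and this partition has at least one block. By Proposition~\ref{pr_no-of-cells-in IS}, the number of blocks is $|v|$, so counting the elements of $I(S)$ block by block yields $\# I(S) = c\,|v|$. On the other hand, Theorem~\ref{th_card-of-iso-gaps-set} gives $\# I(S) = |uv| = |u|\,|v|$. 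Hence $c\,|v| = |u|\,|v|$. Since $u$ and $v$ cannot both be zero (they satisfy $au+bv = 1$), and $v = 0$ would force $au = 1$, contradicting $a>1$, we have $v\neq 0$, so $|v|\ge 1$. Cancelling $|v|$ gives $c = |u|$, as claimed.
\end{proof}
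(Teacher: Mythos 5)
Your argument is correct and is exactly the paper's: the authors likewise combine Smith's uniformity result with Theorem~\ref{th_card-of-iso-gaps-set} ($\# I(S)=|uv|$) and Proposition~\ref{pr_no-of-cells-in IS} ($|v|$ blocks) and divide. Your added checks that $v\neq 0$ and that the partition is nonempty are harmless extra care the paper leaves implicit.
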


We illustrate Proposition \ref{pr_card-of-cells-in IS} with the following example.
\begin{example}\label{ex_three-blocks-elements}
Consider the numerical semigroup $S =\langle 7,16 \rangle$. From Example \ref{ex_no-of-blocks}, we know that $|u| = 7$. By using Proposition \ref{pr_card-of-cells-in IS}, we thus obtain that the cardinality of each block of $\left\{I_{i,7}(S)\colon I_{i,7}(S) \neq \varnothing,\; i\in [1, 6]\right\}$ is $|u| = 7$. Indeed, Example \ref{ex_no-of-blocks} says that the cardinality of each block of $\left\{I_{i,7}(S)\colon I_{i,7}(S) \neq \varnothing,\; i\in [1, 6]\right\}$ is $7$.
\end{example}

\section{Isolated Gaps}
We assume throughout this section that $S=\langle a,b\rangle$, and let $h\colonequals \min I(S)$. 

\vspace{0.5mm}
Smith proved that every element of $I(S)$ has the form $h+s$ for some $s\in S$  \cite[Theorem 2.11]{smith-tjm22}. To identify the value of $h$, Smith showed that either $h = 1$ or $\left(h\pmod a, h\pmod b\right)\in \left\{(1, -1), (-1, 1)\right\}$ \cite[Proposition 3.3]{smith-tjm22}.

\vspace{0.5mm}
In this section, we explicitly describe all the isolated gaps of $S$, and find an exact formula for the smallest isolated gap $h$ of $S$.

\vspace{0.5mm}
 For each $i\in [1, a-1]$ such that $I_{i,a}(S)\neq \varnothing$, the smallest element of $I_{i,a}(S)$, written $h_i$, is a \emph{minimal isolated gap modulo $a$} of $I(S)$ \cite[p. 124]{smith-tjm22}. Smith \cite{smith-tjm22} gave the following theorem, which determines all the minimal isolated gaps modulo $a$ of $I(S)$.

\begin{theorem}\cite[Theorem 2.11]{smith-tjm22}\label{th_minimal-isolated-gaps}
Let $n$ be the smallest positive integer such that $h+nb \in a\N$. Then $h, h+b, \cdots, h+(n-1)b$ are all the minimal isolated gaps modulo $a$ of $I(S)$.
\end{theorem}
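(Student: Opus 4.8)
The plan is to reduce the statement to a single closed-form description of $I(S)$ and then read everything off. From the proof of Lemma~\ref{le_card-of-the-subset-of-NS} one has
\[
T(S)=\{\,a\lambda_1+b\lambda_2 \colon 0\le\lambda_1<|u|,\ 0\le\lambda_2<|v|\,\},
\]
and applying the bijection $\alpha(s)=F(S)-s$, with $F(S)=ab-a-b$, and writing $\ell=\lambda_1+1$, $k=\lambda_2+1$, turns this into
\[
I(S)=\{\,ab-\ell a-kb \colon 1\le\ell\le|u|,\ 1\le k\le|v|\,\},
\]
where the $|uv|$ listed integers are pairwise distinct because $b\mid(\ell-\ell')$ is impossible when $|\ell-\ell'|<|u|<b$; in particular $I(S)\neq\varnothing$. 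The first step is to record the consequences I need from this: minimizing $ab-\ell a-kb$ over the ranges means maximizing $\ell a+kb$, so $h=ab-|u|a-|v|b$; and since $ab-\ell a-kb\equiv-kb\pmod a$ with $\gcd(a,b)=1$ and $1\le k\le|v|<a$, the nonempty blocks $I_{i,a}(S)$ are exactly the $|v|$ sets $\{ab-\ell a-kb \colon 1\le\ell\le|u|\}$ for $k=1,\dots,|v|$, the least element of the $k$-th one (largest $\ell$) being $ab-|u|a-kb$. Hence the minimal isolated gaps modulo $a$ of $I(S)$ are precisely the $|v|$ numbers $ab-|u|a-kb$, $k=1,\dots,|v|$.

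The second step is to pin down $n$ and match it against this list. Writing $h+jb=ab-|u|a-(|v|-j)b$, the requirement $h+nb\in a\N$ is equivalent to $a\mid(|v|-n)b$, hence to $a\mid|v|-n$; for $0<n<|v|$ one has $0<|v|-n<|v|<a$, so no such $n$ works, while $n=|v|$ gives $h+|v|b=a(b-|u|)\in a\N$ because $|u|<b$. Therefore $n=|v|$. Now letting $j$ run over $0,1,\dots,|v|-1$ and setting $k=|v|-j$ (so $k$ runs over $1,\dots,|v|$) yields
\[
\{\,h,\ h+b,\ \dots,\ h+(n-1)b\,\}=\{\,ab-|u|a-kb \colon 1\le k\le|v|\,\},
\]
which by the first step is precisely the set of minimal isolated gaps modulo $a$ of $I(S)$, as claimed. (If one prefers not to produce the entire list of minimal isolated gaps, one may instead quote Proposition~\ref{pr_no-of-cells-in IS} for the count $|v|$ and then merely check that each $h+jb$ belongs to $I(S)$ and is the least element of its own residue class modulo $a$ — the same short computation.)

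The step that demands the most care is the first one, the closed form for $I(S)$: one must be sure that the bounds $\lambda_1<|u|$, $\lambda_2<|v|$ are not only necessary but also sufficient for $a\lambda_1+b\lambda_2\in T(S)$ — in particular that such an element automatically lies below $F(S)$ — and that the coefficients that appear stay within the range where Lemma~\ref{le_rosales-laa05-le1} applies. Both are handled by the strict inequalities $|u|<b/2$ and $|v|<a/2$, which follow from Lemma~\ref{le_iff-condition-for-dls} (equality there would force $au+bv$ to be non-integral): they give $a\lambda_1+b\lambda_2<ab-a-b$ and keep every index strictly below $a$ or below $b$, so Lemma~\ref{le_rosales-laa05-le1} can be invoked directly. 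Everything after that is bookkeeping with congruences modulo $a$ and the single identity $1=au+bv$, and because the whole argument is phrased in terms of $|u|$ and $|v|$ it is uniform in the two cases $u<0$ and $v<0$.
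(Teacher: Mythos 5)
Your argument is correct in substance, but the first thing to say is that the paper does not prove this statement at all: Theorem \ref{th_minimal-isolated-gaps} is imported verbatim from Smith \cite[Theorem 2.11]{smith-tjm22}, so there is no internal proof to compare against. What you have supplied is a self-contained proof, obtained by pushing the paper's own Lemma \ref{le_card-of-the-subset-of-NS} (which parametrizes $T(S)$ as $\{a\lambda_1+b\lambda_2\colon 0\le\lambda_1<|u|,\ 0\le\lambda_2<|v|\}$) through the symmetry bijection $\alpha$ to get the closed form $I(S)=\{ab-\ell a-kb\colon 1\le\ell\le|u|,\ 1\le k\le|v|\}$, and then reading off the residue classes modulo $a$, the least element of each class, and the value $n=|v|$. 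This is a genuinely different and in fact stronger route: your closed form simultaneously yields Propositions \ref{pr_no-of-cells-in IS}, \ref{pr_card-of-cells-in IS} and \ref{pr_min-isolated-gap} and Theorem \ref{th_list-of-isolated-gaps} as immediate corollaries, whereas the paper reaches those by combining the cited theorem of Smith with its own counting lemmas. The price is that the parametrization of $T(S)$ must be verified carefully (uniqueness of the representation $s=a\lambda_1+b\lambda_2$ in the stated ranges, and membership in $N(S)$), points the paper's Lemma \ref{le_card-of-the-subset-of-NS} glosses over; you are right to flag this as the delicate step.

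Two small blemishes, neither fatal. First, the claim that $|v|<a/2$ holds strictly (with the parenthetical about $au+bv$ being ``non-integral'') is not quite right: for $a=2$, $b=3$ the d.l.s.\ is $(u,v)=(-1,1)$ and $|v|=1=a/2$. One can check that $|u|=b/2$ is genuinely impossible, and that $|v|=a/2$ forces $a=2$ with $v>0$, in which case the bound you actually need is the strict one on $|u|$; so the argument survives, but not for the reason you give. Second, the worry about elements of $T(S)$ lying below $F(S)$ is vacuous: $s+1\notin S$ already forces $s+1\le F(S)$, hence $s<F(S)$. With those repairs the proof is complete.
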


By combining Proposition \ref{pr_no-of-cells-in IS} with Theorem \ref{th_minimal-isolated-gaps}, we immediately obtain the following proposition.

\begin{proposition}\label{pr_min-iso-gaps}
There are exactly $|v|$ minimal isolated gaps modulo $a$ of $I(S)$, namely, 
$h, h+b, \cdots, h+(|v|-1)b$.
\end{proposition}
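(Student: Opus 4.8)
The plan is to deduce this proposition directly by matching the two count statements already in hand. Proposition \ref{pr_no-of-cells-in IS} tells us that the partition $\left\{I_{i,a}(S)\colon I_{i,a}(S) \neq \varnothing,\; i\in [1, a-1]\right\}$ of $I(S)$ has exactly $|v|$ nonempty blocks. Theorem \ref{th_minimal-isolated-gaps} tells us that, with $n$ the smallest positive integer such that $h + nb \in a\N$, the elements $h, h+b, \ldots, h+(n-1)b$ are precisely all the minimal isolated gaps modulo $a$ of $I(S)$. Since there is exactly one minimal isolated gap modulo $a$ per nonempty block $I_{i,a}(S)$ (namely its smallest element $h_i$), the number of minimal isolated gaps modulo $a$ equals the number of nonempty blocks. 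Hence $n = |v|$, and the list $h, h+b, \ldots, h+(n-1)b$ becomes $h, h+b, \ldots, h+(|v|-1)b$, which is the asserted conclusion.

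First I would make explicit the bijective correspondence between nonempty blocks of the partition and minimal isolated gaps modulo $a$: the map sending a nonempty block $I_{i,a}(S)$ to its minimum $\min I_{i,a}(S) = h_i$ is a bijection onto the set of minimal isolated gaps modulo $a$ of $I(S)$, since by definition the minimal isolated gaps modulo $a$ are exactly the elements $h_i$ arising this way, and distinct nonempty blocks lie in distinct residue classes mod $a$ (they are the classes $i$ for which $I_{i,a}(S)\neq\varnothing$), so they have distinct minima. Therefore the number of minimal isolated gaps modulo $a$ of $I(S)$ equals $\# \left\{I_{i,a}(S)\colon I_{i,a}(S) \neq \varnothing,\; i\in [1, a-1]\right\}$, which is $|v|$ by Proposition \ref{pr_no-of-cells-in IS}.

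Next I would invoke Theorem \ref{th_minimal-isolated-gaps} to see that the number of minimal isolated gaps modulo $a$ of $I(S)$ is exactly $n$, the smallest positive integer with $h+nb \in a\N$, because $h, h+b, \ldots, h+(n-1)b$ are listed as all of them and these $n$ elements are pairwise incongruent modulo $a$ (consecutive ones differ by $b$, and $\gcd(a,b)=1$ forces the residues $h, h+b, \ldots, h+(n-1)b$ mod $a$ to be $n$ distinct values since $n\le a$). Comparing the two counts yields $n = |v|$, and substituting into Theorem \ref{th_minimal-isolated-gaps} gives the stated list $h, h+b, \ldots, h+(|v|-1)b$. There is essentially no obstacle here — the proposition is a bookkeeping consequence of the earlier results; the only mild care needed is the remark that the $n$ candidate gaps are genuinely in distinct residue classes modulo $a$, so that "number of minimal isolated gaps modulo $a$" really does equal $n$ rather than something smaller.
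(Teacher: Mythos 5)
Your proposal is correct and follows essentially the same route as the paper, which simply combines Proposition \ref{pr_no-of-cells-in IS} with Theorem \ref{th_minimal-isolated-gaps}; you merely spell out the block-to-minimum bijection and the pairwise incongruence of $h, h+b, \ldots, h+(n-1)b$ modulo $a$, details the paper leaves implicit.
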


\vspace{0.2mm}
To determine all elements of any block of $\left\{I_{i,a}(S)\colon I_{i,a}(S) \neq \varnothing,\; i\in [1, a-1]\right\}$, we require a list of lemmas.

\begin{lemma}\label{lem-Ii(S)-elements-form}
Let $I_{i,a}(S)\neq \varnothing$ for some $i\in [1, a-1]$. Then $x \in I_{i,a}(S)$ if and only if $x = h_i+ka$ for some $k\in \N$. 
\end{lemma}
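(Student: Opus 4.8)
The plan is to prove Lemma \ref{lem-Ii(S)-elements-form} by combining the known description of $I(S)$ in terms of sums $h+s$ with the uniform‐partition results already established. First I would recall that, by \cite[Theorem 2.11]{smith-tjm22} (stated here as Theorem \ref{th_minimal-isolated-gaps}) together with Smith's result \cite[Theorem 2.7]{smith-tjm22} that the partition $\left\{I_{i,a}(S)\colon I_{i,a}(S)\neq\varnothing,\ i\in[1,a-1]\right\}$ is uniform, each nonempty block $I_{i,a}(S)$ has cardinality $|u|$ (Proposition \ref{pr_card-of-cells-in IS}). So I only need to produce $|u|$ distinct elements of $I_{i,a}(S)$ of the form $h_i+ka$ with $k\in\N$, and then argue these exhaust the block.

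The forward direction ($x=h_i+ka\in I_{i,a}(S)$ for suitable $k$): I would first note $h_i+ka\equiv h_i\equiv i\pmod a$, so any such element that lies in $I(S)$ automatically lands in $I_{i,a}(S)$. To see $h_i+ka\in I(S)$ for $k=0,1,\dots,|u|-1$, I would use that $h_i$ is itself an isolated gap with $\{h_i-1,h_i+1\}\subseteq S$, and that $a\in S$; the key point is that adding $a$ to both neighbours keeps them in $S$ (since $S$ is closed under addition), so $h_i+ka-1=(h_i-1)+ka\in S$ and $h_i+ka+1=(h_i+1)+ka\in S$, provided $h_i+ka$ itself remains a gap. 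The latter is where the bound $k\le |u|-1$ enters: I expect to show, via Lemma \ref{le_rosales-laa05-le1} and the coordinates $(h_i\bmod a, h_i\bmod b)$ coming from the d.l.s. (as in the discussion preceding Theorem \ref{th_minimal-isolated-gaps} and in the proofs of Lemmas \ref{le_card-of-the-subset-of-NS} and \ref{le_no-of-cells-in TS}), that writing $h_i = ab - \ell a - cb$ with $\ell,c\ge 1$ forces $h_i+ka=ab-(\ell-k)a-cb$ to still be a gap exactly while $\ell-k\ge 1$, i.e. for $k$ up to $\ell-1=|u|-1$. This also shows the $|u|$ values $h_i,h_i+a,\dots,h_i+(|u|-1)a$ are pairwise distinct (they differ by multiples of $a$) and all lie in $I_{i,a}(S)$.

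For the reverse direction, I would take $x\in I_{i,a}(S)$ arbitrary. Since $x\equiv i\equiv h_i\pmod a$ and $x\ge h_i=\min I_{i,a}(S)$, we have $x=h_i+ka$ for some $k\in\N$; that is all that is needed, because the two directions together give the biconditional — I do not even need minimality of $h_i$ beyond ensuring $k\ge 0$. Finally, to confirm that the block is exactly $\{h_i+ka: 0\le k\le |u|-1\}$ and contains nothing else, I would invoke Proposition \ref{pr_card-of-cells-in IS}: the block has exactly $|u|$ elements, we have exhibited $|u|$ of them, hence these are all.

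The main obstacle I anticipate is the precise gap‐counting step — verifying that $h_i+ka$ stops being a gap exactly at $k=|u|$ — which requires pinning down the Bézout coordinates of $h_i$. One clean way to sidestep heavy computation: since Proposition \ref{pr_card-of-cells-in IS} already tells us $\#I_{i,a}(S)=|u|$ and the forward argument shows $\{h_i+ka:0\le k\le |u|-1\}\subseteq I_{i,a}(S)$ as soon as each such value is a gap, it is enough to show these $|u|$ candidates are \emph{not} all of them only if some $h_i+ka$ with $k<|u|$ fails to be a gap; but by Smith's Theorem \ref{th_minimal-isolated-gaps}–type reasoning the first return of $h_i+ka$ to a non‑gap coincides with the "period" dictated by $b$, and the uniform cardinality $|u|$ forces consistency. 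Thus the lemma follows by a counting argument layered on top of the closure property $S+a\subseteq S$, with the delicate part being the identification of where the arithmetic progression $h_i, h_i+a, h_i+2a,\dots$ first leaves $G(S)$.
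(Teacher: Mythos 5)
Your ``reverse direction'' ($x\in I_{i,a}(S)\Rightarrow x=h_i+ka$) is exactly the paper's argument for that implication: congruence to $h_i$ modulo $a$ plus minimality of $h_i$. That half is correct and complete, and it is the only half of the lemma that is ever used later (in the proof of Lemma \ref{le_elements-of-each-cells}).

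The other half is where the comparison gets interesting, because the biconditional as literally stated is false: for $k$ large enough $h_i+ka>F(S)$, so $h_i+ka\in S$ and hence $h_i+ka\notin I_{i,a}(S)$. The paper's own one-line proof of this direction is a non sequitur --- it verifies only that $h_i+ka\equiv i\pmod a$ and concludes membership in $I_{i,a}(S)$, forgetting that $I_{i,a}(S)$ also requires membership in $I(S)$. You implicitly detected this and replaced the claim by the correct one, namely that $h_i+ka\in I_{i,a}(S)$ precisely for $k=0,1,\dots,|u|-1$; but that is a genuinely stronger statement (it is Proposition \ref{pr_block-isolated-gaps}, equivalently Lemma \ref{le_elements-of-each-cells}), which the paper proves later by a different and cleaner route: Lemma \ref{le_either-in-S-or-IS} (if $x\in I_{i,a}(S)$ and $x+a\notin S$ then $x+a\in I_{i,a}(S)$) together with Lemma \ref{le_smallest-n-x+na-in-bN} (the first return of the progression to $S$ lands in $b\N$), and then the cardinality count from Proposition \ref{pr_card-of-cells-in IS}. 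Your version of this half is only a sketch: the pivotal step --- that $h_i$ has Bézout coordinates $h_i=ab-\ell a-cb$ with $\ell=|u|$, so that the progression leaves $G(S)$ exactly at $k=|u|$ --- is announced (``I expect to show\dots'') but not carried out, and your closing paragraph (``the uniform cardinality $|u|$ forces consistency'') is circular as written. So: for the statement as printed, your reverse direction already does everything the paper's proof legitimately does; if you want to salvage the ``if'' direction you should either weaken it to ``$x\equiv i\pmod a$'' or restrict to $k\le |u|-1$ and then supply the missing identification of $\ell$, or simply route through Lemmas \ref{le_either-in-S-or-IS} and \ref{le_smallest-n-x+na-in-bN} as the paper does.
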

\begin{proof}[{\bf Proof}]
Assume that $x\in I_{i,a}(S)$. Since $h_i\in I_{i,a}(S)$, we deduce that $x\equiv h_i\pmod a$. Note that $h_i = \min I_{i,a}(S)$, and hence $x = h_i+ka$ for some $k\in \N$.

\vspace{0.2mm}
Conversely, assume that $x = h_i+ka$ for some $k\in \N$. Clearly $ka\equiv 0\pmod a$. Since $h_i\in I_{i,a}(S)$, we find $h_i \equiv i \pmod a$. Thus $x= h_i+ka \equiv i\pmod a$, and hence $x \in I_{i,a}(S)$. 
\end{proof}


\begin{lemma}\label{le_either-in-S-or-IS}
If $x \in I_{i,a}(S)$ for some $i\in [1, a-1]$, then either $x +a \in S$ or $x+a \in I_{i,a}(S)$. 
\end{lemma}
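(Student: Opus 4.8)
The plan is to analyze what happens when we add $a$ to an isolated gap $x$, using the arithmetic characterization of membership in $S = \langle a, b\rangle$ from Lemma \ref{le_rosales-laa05-le1} together with the fact that $1 = au + bv$. First I would observe that since $x \in I_{i,a}(S)$, we have $x \in G(S)$ with $x - 1, x+1 \in S$, and by Lemma \ref{lem-Ii(S)-elements-form} it follows that $x + a \equiv i \pmod a$ as well. So the only thing to check is the dichotomy: if $x + a \notin S$, then $x + a$ is in fact an isolated gap, i.e. $(x+a) - 1 \in S$ and $(x+a) + 1 \in S$. The key point is that adding $a$ (a generator) to an element of $S$ keeps it in $S$: since $x - 1 \in S$ we get $x + a - 1 = (x-1) + a \in S$, and since $x + 1 \in S$ we get $x + a + 1 = (x+1) + a \in S$. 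Hence the neighbors of $x + a$ automatically lie in $S$, so as soon as $x + a \notin S$ we conclude $x + a \in I(S)$, and combined with the congruence $x + a \equiv i \pmod a$ this gives $x + a \in I_{i,a}(S)$.

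So the proof is essentially immediate and the "main obstacle" is only a bookkeeping one: one must be careful that $x + a$ is genuinely being compared in the right congruence class, which is handled by Lemma \ref{lem-Ii(S)-elements-form}, and that $x - 1, x + 1 \in S$ really do imply $x + a - 1, x + a + 1 \in S$ because $a \in S$ and $S$ is closed under addition. The only mild subtlety is that one should not need $x - 1 \ge 0$ or anything like positivity, since $x$ is an isolated gap and in particular $x \ge 2$ (as $0, 1 \notin G(S)$ for $1 < a$, and an isolated gap has $x - 1 \in S$), so $x - 1, x+1$ are bona fide nonnegative elements of $S$.

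Concretely, the write-up would go: Suppose $x \in I_{i,a}(S)$ and $x + a \notin S$. Since $x - 1, x + 1 \in S$ and $a \in S$, closure under addition gives $(x + a) - 1 = (x - 1) + a \in S$ and $(x + a) + 1 = (x + 1) + a \in S$. Together with $x + a \notin S$, this shows $x + a \in I(S)$. Finally, $x \equiv i \pmod a$ implies $x + a \equiv i \pmod a$, so $x + a \in I_{i,a}(S)$. If instead $x + a \in S$, there is nothing to prove. This completes the argument, and no genuine difficulty arises.
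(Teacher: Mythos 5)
Your proof is correct and follows essentially the same route as the paper: assume $x+a\notin S$, use closure under addition with $a\in S$ to get $(x+a)-1,(x+a)+1\in S$, and note the congruence class is preserved. The preliminary remarks about Lemma \ref{le_rosales-laa05-le1} and $1=au+bv$ are not needed and do not appear in your final argument, which matches the paper's one-line proof.
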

\begin{proof}[{\bf Proof}]
Assume that $x+ a \notin S$. Since $x \in I_{i,a}(S)$, we obtain $x-1, x+1 \in S$. Therefore $(x+a)-1, (x+a)+1 \in S$. Hence $x+a \in I_{i,a}(S)$.
\end{proof}


\begin{lemma}\label{le_smallest-n-x+na-in-bN}
Let $x \in I(S)$. If $n$ is the smallest positive integer such that $x +na \in S$, then $x +na \in b\mathbb{N}$.
\end{lemma}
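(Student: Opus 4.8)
The plan is to reduce the claim to the standard description of the Ap\'ery set $\text{Ap}(S,a) = \{0, b, 2b, \ldots, (a-1)b\}$ already recalled in the proof of Lemma \ref{le_no-of-cells-in TS}. The point is that the hypothesis on $n$ forces $x+na$ to be an element of this Ap\'ery set, and every element of $\text{Ap}(S,a)$ is a multiple of $b$.

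First I would record that $x \notin S$, since $x \in I(S) \subseteq G(S)$; this handles the degenerate case $n=1$, where $x+(n-1)a = x \notin S$. When $n \ge 2$, the minimality of $n$ gives directly that $x+(n-1)a \notin S$. So in all cases $(x+na)-a = x+(n-1)a \notin S$, while $x+na \in S$ by the choice of $n$. By the very definition of the Ap\'ery set this says $x+na \in \text{Ap}(S,a)$, and since $\text{Ap}(S,a) = \{0, b, 2b, \ldots, (a-1)b\}$ we conclude $x+na \in b\N$, as claimed. One may note in addition that $x+na \ge x+a > 0$, so actually $x+na \in \{b, 2b, \ldots, (a-1)b\}$, but this refinement is not needed for the statement.

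I do not expect a genuine obstacle here; the only step that needs a moment's care is the case $n=1$, where ``$x+(n-1)a \notin S$'' is not a consequence of minimality but follows instead from $x$ being a gap. As an alternative (slightly longer) route, one could iterate Lemma \ref{le_either-in-S-or-IS} to see that $x, x+a, \ldots, x+(n-1)a$ all lie in the same block $I_{i,a}(S)$, and then apply Lemma \ref{le_rosales-laa05-le1} to $x+na$; but the Ap\'ery-set argument above is the most economical.
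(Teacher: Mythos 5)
Your argument is correct, but it takes a genuinely different route from the paper's. The paper proves the lemma by brute force on representations: it writes $x+na=\ell a+kb$ with $\ell,k\in\N$, rules out $\ell\ge n$ (since that would put $x$ itself in $S$), and then invokes the minimality of $n$ a second time to force $\ell=0$, whence $x+na=kb$. You instead observe that the two hypotheses say precisely that $x+na\in S$ while $(x+na)-a=x+(n-1)a\notin S$, i.e.\ that $x+na\in\mathrm{Ap}(S,a)$, and then quote the standard description $\mathrm{Ap}(S,a)=\{0,b,\ldots,(a-1)b\}$, which the paper has already recalled in the proof of Lemma \ref{le_no-of-cells-in TS}. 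Your handling of the boundary case is the right one to flag: for $n\ge 2$ the non-membership $x+(n-1)a\notin S$ comes from minimality, while for $n=1$ it comes from $x$ being a gap, and you treat both. The Ap\'ery-set argument is shorter and more conceptual, and it localizes all the number theory in a single cited fact; the paper's argument is more self-contained in that it only uses the definition of $\langle a,b\rangle$ and the minimality of $n$ (and, incidentally, contains a small typo, ``$s\in S$'' for ``$x\in S$''). Either proof is acceptable.
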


\begin{proof}[{\bf Proof}]
Assume that $n$ is the smallest positive integer such that $x +na \in S$. Since $x+na \in S$, there exist $\ell,k \in \mathbb{N}$ such that $x+na = \ell a+kb$. We first claim that $\ell < n$. Suppose to the contrary that $\ell \ge n$. Then $x= (\ell-n)a+kb$, and so $s\in S$. This leads to a contradiction, since $x \in I(S)$. 

\vspace{0.2mm}
Now since $kb\in S$ and $x+na = \ell a+kb$, we deduce that $x +(n-\ell)a \in S$. Since $n$ is the smallest positive integer such that $x +na \in S$, it follows that $\ell=0$. Thus $x+na = kb$. Note that $bk \in b\mathbb{N}$, and hence $x+na\in b\mathbb{N}$.
\end{proof}


\begin{lemma}\label{le_elements-of-each-cells}
Let $I_{i,a}(S)\neq \varnothing$ for some $i\in [1, a-1]$. If $n$ is the smallest positive integer such that $h_i +na \in b\mathbb{N}$, then $I_{i,a}(S) = \{ h_i,h_i +a, \cdots, h_i+(n-1)a\}$.
\end{lemma}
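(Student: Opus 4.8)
The plan is to combine the three preceding lemmas with the elementary observation that $b\N\subseteq S$. By Lemma \ref{lem-Ii(S)-elements-form} we already know that $I_{i,a}(S)\subseteq\{h_i+ka\colon k\in\N\}$, so it suffices to decide, for each $k\in\N$, whether $h_i+ka$ belongs to $I_{i,a}(S)$; the assertion is precisely that this happens exactly for $k\in\{0,1,\dots,n-1\}$.

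First I would show that $h_i+ka\notin S$ for every $k\in\{0,1,\dots,n-1\}$. The case $k=0$ is immediate, since $h_i\in I(S)\subseteq G(S)$. For $1\le k\le n-1$, suppose toward a contradiction that $h_i+ka\in S$, and let $n'$ be the smallest positive integer with $h_i+n'a\in S$; then $1\le n'\le k\le n-1$. Applying Lemma \ref{le_smallest-n-x+na-in-bN} with $x=h_i\in I(S)$ gives $h_i+n'a\in b\N$, which contradicts the minimality of $n$.

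Next I would prove by induction on $k$ that $h_i+ka\in I_{i,a}(S)$ for each $k\in\{0,1,\dots,n-1\}$. The base case $k=0$ holds because $h_i\in I_{i,a}(S)$ by definition. For the inductive step, if $h_i+ka\in I_{i,a}(S)$ with $0\le k\le n-2$, then Lemma \ref{le_either-in-S-or-IS} forces $h_i+(k+1)a\in S$ or $h_i+(k+1)a\in I_{i,a}(S)$; the first alternative is excluded by the previous step (as $k+1\le n-1$), so $h_i+(k+1)a\in I_{i,a}(S)$.

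Finally I would rule out $k\ge n$. Since $b\in S$, we have $b\N\subseteq S$, so the defining property of $n$ gives $h_i+na\in b\N\subseteq S$; then $h_i+ka=(h_i+na)+(k-n)a\in S$ for every $k\ge n$, because $a\in S$ and $S$ is closed under addition. Hence $h_i+ka\notin I(S)$, and in particular $h_i+ka\notin I_{i,a}(S)$. Putting the three steps together with Lemma \ref{lem-Ii(S)-elements-form} yields $I_{i,a}(S)=\{h_i,h_i+a,\dots,h_i+(n-1)a\}$. I do not expect a real obstacle here; the only point needing mild care is keeping the index ranges consistent, in particular the degenerate case $n=1$ (where the asserted set is the singleton $\{h_i\}$), and invoking Lemma \ref{le_smallest-n-x+na-in-bN} with the correct hypothesis $h_i\in I(S)$.
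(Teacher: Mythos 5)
Your proof is correct and follows essentially the same route as the paper's: both establish $h_i+ka\notin S$ for $k\le n-1$ via Lemma \ref{le_smallest-n-x+na-in-bN}, propagate membership in $I_{i,a}(S)$ along the arithmetic progression via Lemma \ref{le_either-in-S-or-IS}, and exclude $k\ge n$ by writing $h_i+ka=(h_i+na)+(k-n)a\in S$. You simply spell out in more detail (the minimal $n'$ argument and the explicit induction) what the paper compresses into two sentences.
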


\begin{proof}[{\bf Proof}]
Assume that $n$ is the smallest positive integer such that $h_i +na \in b\mathbb{N}$. Then, by Lemma \ref{le_smallest-n-x+na-in-bN} we deduce that $h_i, h_i+a, \cdots, h_i+(n-1)a \notin S$. Therefore $\{h_i, h_i+a, \cdots, h_i+(n-1)a\} \subseteq I_{i,a}(S)$ by Lemma \ref{le_either-in-S-or-IS}.

\vspace{0.2mm}
For the reverse inclusion, let $x\in I_{i,a}(S)$. Then, by Lemma \ref{lem-Ii(S)-elements-form} we obtain $x = h_i+ka$ for some $k\in \N$. 
To prove that $x\in \{h_i, h_i+a, \cdots, h_i+(n-1)a\}$, it suffices to show that $k<n$. Suppose to the contrary that $k\ge n$. Then we can write $x= h_i+na +(k-n)a$. Since $h_i+na \in b\N$ by the hypothesis, it follows that $x\in S$. This leads to a contradiction, since $x\in I_{i,a}(S)$. Hence $k<n$, as desired. 
\end{proof}

By combining Lemma \ref{le_elements-of-each-cells} with Proposition \ref{pr_card-of-cells-in IS}, we immediately obtain the following Proposition.

\begin{proposition}\label{pr_block-isolated-gaps}
If $I_{i,a}(S)\neq \varnothing$ for some $i\in [1, a-1]$, then 
\[I_{i,a}(S) = \{ h_i,h_i +a, \cdots, h_i+(|u|-1)a\}.\]
\end{proposition}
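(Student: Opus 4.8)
The plan is to combine the two ingredients that precede this proposition. From Lemma~\ref{le_elements-of-each-cells}, once $I_{i,a}(S)\neq\varnothing$ and $n$ is chosen as the smallest positive integer with $h_i+na\in b\N$, we know $I_{i,a}(S)=\{h_i,h_i+a,\ldots,h_i+(n-1)a\}$; in particular $\#I_{i,a}(S)=n$. On the other hand, Proposition~\ref{pr_card-of-cells-in IS} tells us that every nonempty block $I_{i,a}(S)$ has cardinality exactly $|u|$. So the first step is simply to observe that these two facts force $n=|u|$.

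Concretely, I would start the proof by invoking Lemma~\ref{le_elements-of-each-cells} to write $I_{i,a}(S)=\{h_i,h_i+a,\ldots,h_i+(n-1)a\}$ where $n$ is as in that lemma, and note that the displayed set has exactly $n$ elements (its elements are pairwise distinct since they are in distinct... no, they are all congruent mod $a$, but they are strictly increasing, hence distinct). Then I would cite Proposition~\ref{pr_card-of-cells-in IS} to get $\#I_{i,a}(S)=|u|$. Combining, $n=|u|$, and substituting back into the description from Lemma~\ref{le_elements-of-each-cells} yields exactly the claimed identity $I_{i,a}(S)=\{h_i,h_i+a,\ldots,h_i+(|u|-1)a\}$.

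There is essentially no obstacle here: this is a one-line deduction stringing together a lemma and a proposition that are both already proved in the excerpt. The only thing to be mildly careful about is making sure the hypothesis of Lemma~\ref{le_elements-of-each-cells} (existence of a smallest positive $n$ with $h_i+na\in b\N$) is automatically satisfied — it is, because $h_i\in I(S)\subseteq\N\setminus S$ and by Lemma~\ref{le_smallest-n-x+na-in-bN} (or just because $h_i+na$ eventually lands in $S$ for large $n$ and the first time it does so it lies in $b\N$), so such an $n$ exists. I would state this in a single sentence rather than belabour it.

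Here is the proof I would write.

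\begin{proof}[{\bf Proof}]
Suppose $I_{i,a}(S)\neq \varnothing$ for some $i\in [1, a-1]$. Since $h_i\in I(S)\subseteq \N\setminus S$ and $\N\setminus S$ is finite, there is a smallest positive integer $n$ with $h_i+na\in S$; by Lemma \ref{le_smallest-n-x+na-in-bN} we have $h_i+na\in b\N$, and in particular there is a smallest positive integer $n$ such that $h_i+na\in b\N$. For this $n$, Lemma \ref{le_elements-of-each-cells} gives
\[
I_{i,a}(S) = \{ h_i, h_i+a, \cdots, h_i+(n-1)a\}.
\]
The $n$ integers $h_i, h_i+a, \ldots, h_i+(n-1)a$ are pairwise distinct, so $\# I_{i,a}(S) = n$. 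On the other hand, Proposition \ref{pr_card-of-cells-in IS} yields $\# I_{i,a}(S) = |u|$. Hence $n = |u|$, and substituting this into the displayed equation gives
\[
I_{i,a}(S) = \{ h_i, h_i+a, \cdots, h_i+(|u|-1)a\},
\]
as desired.
\end{proof}
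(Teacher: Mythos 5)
Your proposal is correct and matches the paper's argument: the paper likewise obtains this proposition "immediately" by combining Lemma \ref{le_elements-of-each-cells} with Proposition \ref{pr_card-of-cells-in IS}, which forces $n=|u|$. Your extra sentence checking that the smallest such $n$ exists (via finiteness of $\N\setminus S$ and Lemma \ref{le_smallest-n-x+na-in-bN}) is a harmless and reasonable addition.
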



The next theorem determines all the isolated gaps of $S$.

\begin{theorem}\label{th_list-of-isolated-gaps}
The set of isolated gaps of $S$ is given as
\begin{align*}
I(S) &=\{h, h+a, \ldots, h+\left(|u|-1\right)a\}\cup \{h+b, h+b+a, \ldots, h+b+(|u|-1)a\}\\
&\quad \cup \cdots\cup \{h+(|v|-1)b, h+(|v|-1)b+a,\ldots,  h+(|v|-1)b+(|u|-1)a\}.
\end{align*}
\end{theorem}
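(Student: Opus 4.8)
The plan is to obtain Theorem \ref{th_list-of-isolated-gaps} by assembling the partition of $I(S)$ into the blocks $I_{i,a}(S)$ together with the two structural results already established: Proposition \ref{pr_min-iso-gaps}, which identifies the minimal elements of the nonempty blocks, and Proposition \ref{pr_block-isolated-gaps}, which describes each nonempty block completely once its minimal element is known.

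First I would recall that $\left\{I_{i,a}(S)\colon I_{i,a}(S)\neq\varnothing,\; i\in[1,a-1]\right\}$ is a partition of $I(S)$, so $I(S)$ is the disjoint union of its nonempty blocks. By definition each nonempty block $I_{i,a}(S)$ has a unique smallest element $h_i$, namely its minimal isolated gap modulo $a$, and conversely every minimal isolated gap modulo $a$ is the smallest element of exactly one nonempty block. Hence the map sending a nonempty block to its smallest element is a bijection onto the set of minimal isolated gaps modulo $a$ of $I(S)$. Proposition \ref{pr_min-iso-gaps} identifies this latter set as $\{h,\,h+b,\,h+2b,\,\ldots,\,h+(|v|-1)b\}$, which has $|v|$ elements (they lie in pairwise distinct residue classes modulo $a$ since $\gcd(a,b)=1$ and $|v|<a$, matching the block count in Proposition \ref{pr_no-of-cells-in IS}).

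Next, for each $j\in\{0,1,\ldots,|v|-1\}$ let $B_j$ denote the nonempty block whose smallest element is $h+jb$. Applying Proposition \ref{pr_block-isolated-gaps} to $B_j$ gives
\[
B_j=\{\,h+jb,\; h+jb+a,\; \ldots,\; h+jb+(|u|-1)a\,\}.
\]
Since the $B_j$ are pairwise disjoint and exhaust the nonempty blocks of the partition, taking $I(S)=\bigcup_{j=0}^{|v|-1}B_j$ yields precisely the displayed formula, completing the proof.

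The only step that requires any care is the middle one — checking that $h,h+b,\ldots,h+(|v|-1)b$ correspond bijectively to the nonempty blocks, with none omitted and none counted twice. This follows from the definition of a minimal isolated gap modulo $a$, the count of nonempty blocks in Proposition \ref{pr_no-of-cells-in IS}, and the residue-class observation above; once it is in place, Theorem \ref{th_list-of-isolated-gaps} is a purely formal consequence of Propositions \ref{pr_min-iso-gaps} and \ref{pr_block-isolated-gaps}.
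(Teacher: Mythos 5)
Your proposal is correct and follows essentially the same route as the paper, whose proof is simply the observation that the theorem follows from Propositions \ref{pr_min-iso-gaps} and \ref{pr_block-isolated-gaps}. Your version merely makes explicit the bookkeeping (the bijection between nonempty blocks and their minimal elements) that the paper leaves implicit.
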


\begin{proof}[{\bf Proof}]
This follows from Propositions \ref{pr_min-iso-gaps} and \ref{pr_block-isolated-gaps}.
\end{proof}

\vspace{0.5mm}

Theorem \ref{th_list-of-isolated-gaps} suggests that the identifying the value of the smallest isolated gap of $S$ is crucial in calculating all the isolated gaps of $S$. The following proposition gives an exact formula for the smallest isolated gap of $S$ .

 
\begin{proposition}\label{pr_min-isolated-gap}
If $S = \langle a,b \rangle$, then $\min I(S) = F(S)-(|u|-1)a - (|v|-1)b$.
\end{proposition}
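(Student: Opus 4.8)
The plan is to combine the structural description of $I(S)$ from Theorem \ref{th_list-of-isolated-gaps} with the symmetry bijection $\alpha$ and the characterization of the largest isolated gap via $F(S)$. First I would observe that from Theorem \ref{th_list-of-isolated-gaps} the set $I(S)$ is an explicit ``rectangular'' array of $|u|\cdot|v|$ elements, parametrized by shifts $ka$ with $0\le k\le |u|-1$ and $\ell b$ with $0\le \ell\le |v|-1$, all added on top of $h=\min I(S)$. In this display the unique largest element is clearly $h+(|u|-1)a+(|v|-1)b$, since increasing either index strictly increases the value. So it suffices to pin down $\max I(S)$ independently, and then solve for $h$.

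Next I would compute $\max I(S)$ using the symmetry of $S$ (every embedding dimension two numerical semigroup is symmetric) together with the bijection $\alpha\colon T(S)\to I(S)$, $\alpha(x)=F(S)-x$, from Proposition \ref{pr_card-of-IS-NS-are-same}. Since $\alpha$ is an order-reversing bijection between $T(S)$ and $I(S)$, we have $\max I(S)=F(S)-\min T(S)$. The smallest element of $T(S)$ is $0$: indeed $0\in S$ with $-1,1\notin S$ (as $a>1$ forces $1\notin S$, and $-1\notin\mathbb N$), so $0\in T(S)$, and plainly no smaller element of $N(S)$ exists. Hence $\max I(S)=F(S)-0=F(S)$. (Alternatively one can note directly that $F(S)\in I(S)$ because $F(S)-1,F(S)+1\in S$ by symmetry and the definition of the Frobenius number, and $F(S)$ is trivially the largest gap, hence the largest isolated gap.)

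Combining the two computations of $\max I(S)$ gives $F(S)=h+(|u|-1)a+(|v|-1)b$, and rearranging yields $h=\min I(S)=F(S)-(|u|-1)a-(|v|-1)b$, as claimed. I would present this as a short argument: invoke Theorem \ref{th_list-of-isolated-gaps} for the ``rectangular'' form, identify its maximum, then show $F(S)=\max I(S)$ via symmetry (either through $\alpha$ and $0=\min T(S)$, or directly), and equate.

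The only genuinely delicate point — the ``main obstacle'', though it is minor — is justifying that $h+(|u|-1)a+(|v|-1)b$ really is the maximum of the union in Theorem \ref{th_list-of-isolated-gaps}; one must be slightly careful that the various blocks $\{h+\ell b,\dots,h+\ell b+(|u|-1)a\}$ do not overlap in a way that confuses the bookkeeping, but since we only need an \emph{upper bound} for every listed element and the single element $h+(|u|-1)a+(|v|-1)b$ attains it, this is immediate: every element of $I(S)$ is of the form $h+ka+\ell b$ with $0\le k\le|u|-1$ and $0\le\ell\le|v|-1$, hence is at most $h+(|u|-1)a+(|v|-1)b\in I(S)$. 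Everything else is a one-line consequence of symmetry.
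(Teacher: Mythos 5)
Your proposal is correct and takes essentially the same route as the paper: read off $\max I(S)=h+(|u|-1)a+(|v|-1)b$ from Theorem \ref{th_list-of-isolated-gaps}, identify $\max I(S)$ with $F(S)$, and solve for $h$. The only difference is that the paper cites Smith for the fact $F(S)\in I(S)$, whereas you derive it yourself (via $\alpha$ and $\min T(S)=0$, or directly from symmetry); your argument for that step is sound and makes the proof self-contained.
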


\begin{proof}[{\bf Proof}]
Observe that $\max I(S) = h+(|u|-1)a+(|v|-1)b$. From \cite[p. 125]{smith-tjm22}, we know that $F(S) \in I(S)$, and so $\max I(S) = F(S)$. Thus $h+(|u|-1)a+(|v|-1)b = F(S)$, and hence $h= F(S)-(|u|-1)a - (|v|-1)b$.  
\end{proof}


We illustrate Proposition \ref{pr_min-isolated-gap} with the next two examples.

\begin{example}
Consider the numerical semigroup $S= \langle 9, 13 \rangle$. By \cite[Proposition 2.13]{rosales-b09}, we obtain $F(S) = 9\times 13-9-13 = 95$. 
We know from Example \ref{ex-1_card-of-IS} that $|u| = 3$ and $|v| = 2$. By using Proposition \ref{pr_min-isolated-gap}, we thus obtain $\min I(S)= 95- 2\times 9 - 1\times 13 = 64$.  
\end{example}


\begin{example}
Consider the numerical semigroup $S= \langle 7, 11 \rangle$. By \cite[Proposition 2.13]{rosales-b09}, we obtain $F(S) = 7\times 11-7-11 = 59$. Observe that $(-3, 2)$ is a solution of the equation $7x+11y = 1$. Moreover, the solution $(-3, 2)$ is the d.l.s. by Lemma \ref{le_iff-condition-for-dls}. Therefore $|u| = 3$ and $|v| = 2$. By using Proposition \ref{pr_min-isolated-gap}, we thus obtain $\min I(S)= 59- 2\times 7 - 1\times 11 = 34$. Indeed, a modest computation confirms that $I(S) = \{34, 41, 45, 48, 52, 59\}$.
\end{example}

\vspace{0.3mm}
To facilitate further computations on $I(S)$, we construct the $|v|\times |u|$ matrix $L(S)$ whose $(i,j)$-entry is given by $L(S)(i, j) = h+(i-1)b+(j-1)a$. That is,  

\begin{equation*}
	L(S) = 
	\begin{bmatrix}
		h & h+a & \cdots & h+\left(|u|-1\right)a \\
		h+b & h+b+a & \cdots & h+b+(|u|-1)a \\
		\vdots  & \vdots  & \ddots & \vdots  \\
		h+(|v|-1)b & h+(|v|-1)b+a & \cdots & h+(|v|-1)b+(|u|-1)a 
	\end{bmatrix}.
\end{equation*}

\vspace{1.0mm}
For example, consider the numerical semigroup $S= \langle 7, 16 \rangle$. From Example \ref{ex_three-blocks-elements}, we know that $h= 15$, $|u| = 7$, and $|v| = 3$. Thus
\[L(S)=
\begin{bmatrix}
	15 & 22 & 29 & 36 & 43 & 50 & 57\\
	31 & 38 & 45 & 52 & 59 & 66 & 73\\
	47 & 54 & 61 & 68 & 75 & 82 & 89
\end{bmatrix}.\]

By using Proposition \ref{pr_min-isolated-gap}, we can alternatively calculate any $(i,j)$-entry of the matrix $L(S)$ as follows. 
\begin{corollary}
The $(i,j)$-entry of the $|v|\times |u|$ matrix $L(S)$ is given by the rule: 
\[L(S)(i, j) = F(S)-(|u|-j)a- (|v|-i)b.\]
\end{corollary}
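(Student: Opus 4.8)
The plan is to derive this corollary directly from the definition of the matrix $L(S)$ together with the formula for $h = \min I(S)$ supplied by Proposition \ref{pr_min-isolated-gap}. By construction, $L(S)(i,j) = h + (i-1)b + (j-1)a$, so the only substitution needed is the expression $h = F(S) - (|u|-1)a - (|v|-1)b$.

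First I would simply substitute: $L(S)(i,j) = F(S) - (|u|-1)a - (|v|-1)b + (i-1)b + (j-1)a$. Then I would collect the coefficients of $a$ and $b$ separately: the coefficient of $a$ becomes $-(|u|-1) + (j-1) = j - |u|$, and the coefficient of $b$ becomes $-(|v|-1) + (i-1) = i - |v|$. This yields $L(S)(i,j) = F(S) - (|u|-j)a - (|v|-i)b$, which is exactly the claimed identity. The argument is a one-line computation once Proposition \ref{pr_min-isolated-gap} is invoked, so no serious obstacle arises here; the corollary is genuinely a reformulation.

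If I wanted a marginally more conceptual phrasing, I could note that $\max I(S) = F(S)$ (by Proposition \ref{pr_min-isolated-gap}, since $F(S) \in I(S)$ and $F(S) = h + (|u|-1)a + (|v|-1)b$) sits in the bottom-right corner of $L(S)$, i.e. $L(S)(|v|,|u|) = F(S)$, and each step leftward subtracts $a$ while each step upward subtracts $b$; moving from entry $(|v|,|u|)$ to entry $(i,j)$ therefore subtracts $(|u|-j)a + (|v|-i)b$. Either presentation is acceptable, but the direct substitution is cleanest for the write-up. The main (and only) point to be careful about is bookkeeping of the index shifts $i-1$ versus $|v|-i$ and $j-1$ versus $|u|-j$, ensuring the signs are handled correctly.
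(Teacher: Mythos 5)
Your proposal is correct and follows exactly the paper's own argument: substitute $h = F(S)-(|u|-1)a-(|v|-1)b$ from Proposition \ref{pr_min-isolated-gap} into $L(S)(i,j) = h+(i-1)b+(j-1)a$ and collect coefficients. The alternative ``walk from the bottom-right corner'' phrasing is a nice sanity check but not needed.
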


\begin{proof}[{\bf Proof}]
By Proposition \ref{pr_min-isolated-gap}, we know that $h = F(S)-(|u|-1)a - (|v|-1)b$. Therefore
\begin{align*}
L(S)(i, j) &= h+(i-1)b+(j-1)a\\
&= F(S)-(|u|-1)a - (|v|-1)b+(i-1)b+(j-1)a\\
&= F(S)-(|u|-j)a- (|v|-i)b.
\end{align*}
\end{proof}


\end{document}